\pgfplotsset{compat=1.16}
  \renewcommand{\ALG@name}{Box}
\newcommand{\expm}{\operatorname{e}}
\newcommand{\sigX}{\mathbb{X}}
\newcommand{\sigx}{\mathbbm{x}}
\newcommand{\eig}{\operatorname{eig}}
\newcommand{\trac}{\operatorname{tr}}
\providecommand{\martin}[1]{{\textcolor{black}{#1}}}
\providecommand{\martinr}[1]{{\textcolor{black}{#1}}}
\providecommand{\christian}[1]{{\leavevmode\color{black}#1}}
\newcommand{\blue}[1]{{\leavevmode\color{black}#1}}
\newenvironment{customlegend}[1][]{%
	\begingroup
	\csname pgfplots@init@cleared@structures\endcsname
	\pgfplotsset{#1}%
}{%
	\csname pgfplots@createlegend\endcsname
	\endgroup
}%
\def\addlegendimage{\csname pgfplots@addlegendimage\endcsname}
\begin{document}

\title{Dimension reduction for path signatures}

\author{Christian Bayer and Martin Redmann}

\maketitle

\begin{abstract}
This paper focuses on the mathematical framework for reducing the complexity of models using path signatures. The structure of these signatures, which can be interpreted as  collections of iterated integrals along paths, is discussed and their applications in areas such as stochastic differential equations (SDEs) and financial modeling are pointed out. In particular, exploiting the rough paths view, solutions of SDEs continuously depend on the lift of the driver. Such continuous mappings can be approximated using (truncated) signatures, which are solutions of high-dimensional linear systems. In order to lower the complexity of these models, this paper presents methods for reducing the order of high-dimensional truncated signature models while retaining essential characteristics. The derivation of reduced models and the universal approximation property of (truncated) signatures are treated in detail. Numerical examples, including applications to the (rough) Bergomi model in financial markets, illustrate the proposed reduction techniques and highlight their effectiveness.
\end{abstract}

\textbf{Keywords:} signature models $\cdot$ applications of rough analysis $\cdot$ stochastic differential equations $\cdot$  model order reduction $\cdot$ Petrov-Galerkin projections $\cdot$ financial models

\noindent\textbf{MSC classification:} 60H10 $\cdot$ 60L10 $\cdot$ 60L90 $\cdot$ 65C30 $\cdot$  93A15

\section{Introduction}
\label{sec:introduction}

The path signature (Chen \cite{chen1957integration}) -- i.e., the sequence of all iterated integrals of the components of a path $x: [0,T] \to \R^d$ -- plays a fundamental role in the analysis of (rough) paths and functions on path space.
On the theoretical side, the signature is the fundamental building block of Lyons' theory of \emph{rough paths} \cite{lyons1998differential}, providing pathwise analysis of differential equations driven by rough paths (i.e., H\"{o}lder continuous paths of arbitrary H\"{o}lder coefficient), see \cite{friz2020course,friz2010multidimensional} for modern accounts of the theory.
Indeed, a rough path is nothing but a path together with a (properly truncated) signature.

Some of the most important properties of path signatures include:
\begin{itemize}
    \item The signature encodes the path up to so-called tree-like excursions, and it completely determines the path, provided that we add running time as a component.
    \item The signature is invariant under time re-parameterization.
    \item The linear functionals of the signature form an algebra of functionals on path space.
\end{itemize}

In applications, signatures play a fundamental role for approximations of functionals on \emph{(rough) path space}, comparable to polynomials on finite-dimensional spaces.
Indeed, it is well-known that linear functionals on signatures are \emph{universal approximators}, i.e., that continuous functions mapping path-space to the real numbers can be approximated by linear functionals of the signature when restricted to compact sets.
Several alternative formulations also exist, we refer to \cite{lee2023signature} for a recent review article and \cite{bayer2023primal} for a version on approximation in an $L^p$-sense.

Of course, similar approximation results are also available in a probabilistic rather than pathwise setting, i.e., when considering stochastic processes rather than single paths.
A corresponding universal approximation result in terms of signatures (of semimartingales) can be found in \cite{cuchiero2022universal}, but we  also like to mention the classical stochastic Taylor expansion in terms of iterated integrals of the Brownian motion.

The path signature has been found to be a powerful feature map in machine learning, when dealing with time series data, see T.~Lyons' ICM lecture \cite{lyons2014rough}, for instance, \cite{perez2018signature,buehler2020generating,morrill2020generalised,lyons2022signature,arrubarrena2024novelty} for specific examples.
Here, both the fact that signatures provide (efficient) encodings of paths (of arbitrary length) in a fixed vector space and the invariance under time re-parameterizations are crucial.

Beyond machine learning, signatures can be used as a fundamental building block for modelling and stochastic analysis of processes with memory -- or, more generally, when we only want to impose minimal assumptions on the model.
An example for the use of signatures in modelling is \cite{cuchiero2022signature}, where a diffusion-type model with coefficients given as linear functionals of the signature of the underlying Brownian motion is fitted to market prices of options.

Beyond modelling, the signature can be a powerful tool for analysis and numerics of non-Markovian processes.
In particular, for BSDEs (see, for instance, \cite{BFZ24}) or stochastic optimal control (\cite{KLPA20,bayer_hager_riedel_book24}) we are often dealing with computations of conditional expectations w.r.t.~the relevant sigma-algebra $\mathcal{F}_t$.
In the Markovian case, this corresponds to a finite-dimensional regression problem.
But in the non-Markovian case (i.e., with memory), such conditionals expectations generally depend on the whole history of the driving path up to time $t$.
Such path-dependent conditional expectations can be efficiently computed as (linear or non-linear) functions of the signature of the underlying path, see \cite{bayer2023primal}.

From a computational point of view, the signature poses severe challenges mainly due to its size.
Indeed, a priori, the signature -- the collection of \emph{all} iterated integrals -- is an infinite-dimensional object, and, hence, not directly computable.
For some applications, the \emph{signature kernel} (see \cite{lee2023signature}) can be used instead, which is actually computable as a solution of a PDE.
Otherwise, the most natural approach is to \emph{truncate} the signature, i.e., to restrict the computations to iterated integrals up to a given degree.
Even so, the size of the (finite) sequence of iterated integrals of degree up to $m$ increases rapidly in terms of $m$ as well as the dimension $d$ of the underlying space: in fact, it is easy to see that there are $1 + d + \cdots + d^m$ such terms -- including one trivial term of order $0$, think of an ``intercept'' in a linear model.




This work addresses these concerns by exploring model order reduction (MOR) techniques specifically tailored for signature-based models. MOR has a long tradition in the field of deterministic control systems \cite{moo1981}, where difficult to control and hard to observe states are neglected to receive a low-dimensional approximation of a large-scale  model. This work laid the foundation for establishing balancing-related MOR for controlled SDEs \cite{beckerhartmann,bennerdamm,redmannbenner, bennerdammcruz}. Complexity reduction in such probabilistic frameworks has an enormous impact, e.g., when sampling methods are applied. \christian{More specific to financial models, an important potential use case of MOR is derivative pricing, requiring many simulations based on one (calibrated) asset price model. For details on the application of MOR to derivative pricing, including in the case of American options, see \cite{mor_heston}.}

However, the above mentioned works rely on certain stability assumptions that are not necessarily satisfied in practice. A way to overcome this issue is to use the ansatz of \cite{morGawJ90} established for controlled systems of ordinary differential equations and extended to SDEs in \cite{redmann_jamshidi}. The control theory perspective on MOR does no longer work when dimension reduction is applied to financial models. Let us refer to \cite{irka_finance, mor_heston}, where a bridge was built between the previously mentioned concepts and complexity reduction for large-scale asset price models. Detached from control concepts and stability assumptions, MOR for signatures is studied in this paper. We develop a robust mathematical framework to reduce the complexity of these models while preserving their essential features, provide a comprehensive method for deriving reduced order models of truncated signatures, with discussions on their theoretical foundations and practical implications. The efficiency of the proposed techniques is illustrated through numerical examples, including an application to the (rough) Bergomi model--a widely used model in financial markets. The results demonstrate the effectiveness of MOR in reducing the dimension without a significant loss of accuracy.

Indeed, we apply MOR to realistic signature models of financial markets, as derived from popular equity models (the classical Bergomi model \cite{bergomi2015stochastic} and the rough Bergomi model \cite{bayer2016pricing}, respectively). \blue{Taking the calibrated signature model as ``ground truth'', we then apply MOR to derive a reduced, low dimensional model, such that European option prices in the reduced model are highly accurate approximations of the prices in the high-dimensional signature model.}
Starting from signature models of dimension $n  = 1365$ in the Bergomi case, we find that a reduced model of dimension $\widetilde{n} = 5$ achieves a relative error of order $10^{-2}$ in the implied volatility of option prices,  $\widetilde{n} = 11$ is sufficient for a relative error of $10^{-4}$, and $\widetilde{n} = 27$ is exact up to machine precision, see Figure~\ref{ImVol_12month_rel_error_r5_r11_r27}. We observe that the same order of accuracy is obtain in the $L^2$-sense, see Figure~\ref{plot_L2error}. This means that our reduced system yields a strong approximation.

In case of the rough Bergomi model, we choose an approximate signature model of dimension $n = 3280$. 
A reduced dimension of $\widetilde{n} = 15$ achieves a relative error of around $10^{-3}$. Furthermore, fixing $\widetilde{n} = 55$ ensures machine precision in the rough case, see Figure~\ref{ImVol_12month_rel_error_r15_r55}. Once more, the $L^2$-error of the approximation is very similar, see Figure~\ref{plot_L2error_rough}.

These numerical illustrations show that MOR can alleviate one of the main drawbacks of signature based models and methods, namely the fast explosion of the signature's dimension in terms of the dimension of the underlying process as well as the truncation level.

\paragraph*{Outline} We start by providing an introduction to path signatures in Section~\ref{sec:signatures}. The basic structures are introduced in the setting of smooth paths and later extended to signatures of continuous semimartingales. We also provide the fundamental approximation results for signatures and minimal necessary results from rough path theory. As a motivation, we summarize results on signature models in finance by \cite{cuchiero2022signature}.

Section~\ref{sec:dimension-reduction} provides a derivation of MOR for signatures. Note that (truncated) signatures of continuous semimartingales solve a linear system of SDEs with specific structure, to which general MOR for SDEs is adapted. The reduced signature equation is introduced in Section~\ref{sec:reduced-order-signature}.

Finally, we provide numerical examples showcasing the success of MOR for signature models in finance in Section~\ref{sec:numerical-examples}.


\section{Signatures}
\label{sec:signatures}


\christian{Signatures can be seen as part of the repertoire of mathematical finance by now. For this reason, we only give a very cursory introduction here, mainly to fix notations.
For readers not yet familiar with signatures, a detailed exposition is given in Appendix~\ref{sec:intr-sign}.}

\christian{Given a finite-dimensional real vector space $V$ and $T>0$, } let $x: [0,T] \to V$ be an $\alpha$-Hölder continuous path ($\alpha \in (0,1]$), and denote by $\mathbf{x} \in \mathscr{C}^\alpha_g([0,T]; V)$ a geometric rough path lift of $x$.
We will often assume that the first component $x^1_t \equiv t$, and denote the set of geometric $\alpha$-Hölder rough paths satisfying this condition by $\widehat{\mathscr{C}}^\alpha_g([0,T]; V)$.

The (truncated) \emph{signature} of $\mathbf{x}$ is the collection of all iterated integrals \christian{of the form
\begin{equation*}
  \int_{s < t_1 < \cdots < t_n < t} \dd x^{i_1}_{t_1} \cdots \dd x^{i_n}_{t_n}, \ i_1, \ldots, i_n \in \{1, \ldots, \dim V\}, \ n \in \mathbb{N}, \ 0 \le s \le t \le T,
\end{equation*}
} (up to order $m \in \N$) and is denoted by $\sigx^{<\infty}_{s,t}$ ($\sigx^{\le m}_{s,t}$, respectively) for $0 \le s \le t \le T$.
It takes values in the extended tensor algebra $T((V)) \coloneqq \prod_{n=0}^\infty V^{\otimes n}$ or its truncated version $T^m(V) \coloneqq \prod_{n=0}^m V^{\otimes n}$.

\christian{In many respects, signatures, or, more precisely, linear functionals on the signature tensor, can be seen as natural generalizations of polynomials from Euclidean space to path-space. In particular, the signature $\sigx^{<\infty}_{0,T}$ (essentially) characterizes the underlying path $x$, and the space of linear functionals on the signature form an \emph{algebra} of functions on path space. As a consequence, the linear functionals on the signature are dense in the continuous functions on path-space on compacts -- see Theorem~\ref{thr:universality} for a precise statement. Hence, signatures provide universal features on path space, and can be seen as natural building blocks for approximation of continuous functions on path-space, similar to the role of polynomials in Euclidean space.}

A particularly important property of signatures from the point of view of MOR is that it is given as the solution of a \emph{linear system of (controlled) differential equations}. Indeed, we formally have
\begin{equation}
  \label{eq:signature-ODE}
  \dd \sigx_{0,t}^{<\infty} = \sigx_{0,t}^{<\infty} \otimes \dd x_t = \sum_{i=1}^d \sigx_{0,t}^{<\infty} \otimes e_i \dd x^i_t, \quad \sigx_{0,0}^{<\infty} = 1 \in T((V)),
\end{equation}
which can be made precise by imposing a suitable topology.
In the case of the truncated signature, the corresponding differential equation
\begin{equation}
  \label{eq:signature-ODE-truncated}
  \dd \sigx_{0,t}^{\le m} = \sigx_{0,t}^{\le m} \otimes \dd x_t = \sum_{i=1}^d \sigx_{0,t}^{\le m} \otimes e_i \dd x^i_t, \quad \sigx_{0,0}^{\le m} = 1 \in T^m(V),
\end{equation}
is given on the finite-dimensional vector space $T^m(V)$, and a proper solution theory is available by, e.g., considering $T^m(V)$ as Euclidean space. In both cases, the differential equations should be considered as \emph{rough differential equations}.

In what follows, we will consider signatures of random paths. More precisely, let $X$ be a \emph{continuous semimartingale}. Analogously, we define the full and truncated signature $\sigX^{<\infty}$ and $\sigX^{\le m}$ in terms of the \emph{Stratonovich integral}. We do allow $X$ to have deterministic components, such as $t$ or $\langle X \rangle_t$.


In the following, we will concentrate on the semimartingale setting, but note that many discussions are equally valid in the (deterministic or stochastic) rough path framework (or even the bounded variation setting).
As seen in~\eqref{eq:signature-ODE-truncated} above, the truncated signature of the semimartingale $X$ satisfies
\begin{equation}\label{eq:signature-SDE-truncated}
  \dd \sigX_{0,t}^{\le m} = \sigX_{0,t}^{\le m} \otimes \circ \dd X_t = \sum_{i=1}^d \sigX_{0,t}^{\le m} \otimes e_i \circ \dd X^i_t, \quad \sigX_{0,0}^{\le m} = 1 \in T^m(V),
\end{equation}
where the tensor product $\otimes$ is truncated to $T^m(V)$.
Since~\eqref{eq:signature-SDE-truncated} is a linear SDE (driven by a semimartingale), it fits within the context of stochastic model order reduction, see, for instance, \cite{irka_finance, mor_heston}. For concreteness, we will rewrite the SDE in terms of coordinates.
We will use the canonical coordinates
\begin{equation*}
  \underbrace{1}_{\in V^{\otimes 0} \subset T^m(V)}, \, \underbrace{e_1, \ldots, e_d}_{\in V \subset T^m(V)}, \, \underbrace{e_1 \otimes e_1 , \ldots e_d \otimes e_d}_{\in V^{\otimes 2} \subset T^m(V)}, \ldots,\, \underbrace{e_1^{\otimes m}, \ldots, e_d^{\otimes m}}_{\in V^{\otimes m} \subset T^m(V)},
\end{equation*}
which we order by length first and lexicographically within elements of the same length.
The linear vector fields $V_i: T^m(V) \to T^m(V)$, $\mathfrak{a} \mapsto \mathfrak{a} \otimes e_i$ (in the sense of the truncated tensor product), can then be represented by matrices $N_i \in \R^{n \times n}$, where
\begin{equation*}
  n = n(d,m) = \sum_{i=0}^m d^i = \frac{d^{m+1}-1}{d - 1} = \dim T^m(V).
\end{equation*}
The matrices $N_1, \ldots, N_d$ can be given in closed form, see Box~\ref{alg:matrix-signature-SDE}.
\begin{algorithm}[!tb]
\caption{Matrices representing the vector fields driving the truncated signature.} \label{alg:matrix-signature-SDE}
\begin{algorithmic}[1]
\State Initialize matrices $N_1= \dots = N_{d}=0$
\For{$i=1, \dots, d$}
\For{$k=1, 2, \dots, (d^m-1)/(d-1)$}
\State $N_i[1+(k-1)d+i,k]=1$
  \EndFor
  \EndFor
\end{algorithmic}
\end{algorithm}
\martin{Usually, $n$ is very large leading to a high-dimensional linear SDE of the form}
\begin{equation}
  \label{eq:signature-SDE-matrices}
  \dd \sigX^{\le m}_{0,t} = \sum_{i=1}^d N_i \sigX^{\le m}_{0,t} \circ \dd X^i_t, \quad \sigX^{\le m}_{0,0} = (1, 0, \ldots, 0)^\top \in \R^n,
\end{equation}
where we use the same symbol $\sigX^{\le m}_{0,t} \in \R^n$ for the version of the signature in coordinates.
\martin{It is important to notice that $N_1, \ldots, N_d$ do not depend on the underlying semimartingale, but only on the dimension $d$ and the truncation level $m$.}
\begin{remark}
  \label{rem:nilpotency-matrices}
  Note that the matrices $N_1, \ldots, N_d$ are \emph{nilpotent} with order $m+1$, i.e., for any $i_1, \ldots, i_m, i_{m+1} \in \set{1, \ldots, d}$ and $1 \le k \le m$ we have
  \begin{equation*}
    N_{i_1} \cdot N_{i_2} \cdots N_{i_k} \neq 0, \text{ but } N_{i_1} \cdot N_{i_2} \cdots N_{i_m} \cdot N_{i_{m+1}}=0. 
  \end{equation*}
  This is clear from the construction: applying the vector field $V_i$ to an element $\mathfrak{a} \in V^{\otimes n} \subset T^m(V)$ yields an element $V_i(\mathfrak{a}) \in V^{\otimes (n+1)} \subset T^m(V)$, for any $0 \le n < m$.
  Hence, iteratively applying the vector fields $m+1$ times always gives the zero element, as we are ``pushed out'' of the truncated tensor algebra.
\end{remark}
\martin{System \eqref{eq:signature-SDE-matrices} is a large-scale and hence it is computationally expensive to evaluate it. Therefore, we propose an MOR scheme in the following section aiming to reduce computational complexity.}

\section{Dimension reduction}
\label{sec:dimension-reduction}

\martin{We propose a projection-based dimension reduction procedure that can, in principle, be applied directly to the Stratonovich equation \eqref{eq:signature-SDE-matrices}. However, we rewrite \eqref{eq:signature-SDE-matrices} as an It\^o stochastic differential equation for two reasons. First, the It\^o case is studied and hence understood much better in the MOR context. Second, there is theoretical evidence that the dimension reduction technique used in this paper performs better when applying it in the It\^{o} framework \cite{mor_fbm}.}
In order to simplify the dimension reduction, we will impose assumptions assuring that the It\^{o} version is still linear.

\begin{assumption}
  \label{ass:brownian-signature}
  $X_t$ is a time-extended Brownian motion, i.e., $X^1_t = t$ and $B_t \coloneqq (B^2_t, \ldots, B^d_t)^\top \coloneqq (X^2_t, \ldots, X^d_t)^\top$ is a $(d-1)$-dimensional Brownian motion with associated matrix $K=(k_{ij})_{i, j=2, \dots, d}$ that determines the covariance by $\mathbb E[B_t B_t^\top] = K t$.
\end{assumption}

\begin{remark}\label{ass:semimartingale}
Alternatively, we can also consider a general continuous semimartingale, provided that the quadratic variation processes are also part of $X$.  The driving process $X$ is a continuous semimartingale such that all quadratic covariations $[X^i,X^j]_t$ are already (constant multiples) of coordinates of $X$.
  More specifically, $X_t = (\widehat{X}_t, \bar{X}_t)$, where $\bar{X}$ is a (proper) continuous semimartingale and $\widehat{X}$ is a bounded variation process such that for any $i,j$ there is a constant $k_{ij}$ and an index $c_{ij}$ such that $[\bar{X}^i, \bar{X}^j] \equiv k_{ij} \widehat{X}^{c_{ij}}$.
Note that Assumption~\ref{ass:brownian-signature} is a special case of this semimartingale setting. For simplicity, we will concentrate on the former case in what follows and leave the situation of this remark for future work.
\end{remark}
We aim to rewrite \eqref{eq:signature-SDE-matrices} in the It\^{o} sense.
The associated It\^{o}-Stratonovich correction term is $0.5 \sum_{i, j=2}^{d} N_i N_j k_{ij}$, where $k_{ij}$ is the $ij$-th entry of the matrix $K$ determining the covariance of the Brownian motion. The drift coefficient is $A= N_1 + 0.5 \sum_{i, j=2}^{d} N_i N_j k_{ij}$ leading to
\begin{align}\label{ito_sig}
 \dd\mathcal X_{t} = A \mathcal X_{t} \dd t+ \sum_{i=2}^{d}N_i \mathcal X_{t} \dd B_{t}^i,   \quad \mathcal X_{0}=z, \quad t\in [0,T],
\end{align}
where $z\in \mathbb{R}^n$ is a generic initial state. \martin{In particular, we are interested in $z=\big(\begin{matrix} 1 & 0 &\dots & 0 \end{matrix}\big)^\top$ giving us the signature process $\mathcal X_t = \sigX^{\le m}_{0,t}$, but we discuss the dimension reduction procedure in more generality. In principle, it is also possible to consider the same MOR scheme yielding an accurate estimate for an entire subspace of initial states but this is beyond the focus of this paper. Moreover, we do not aim to approximate the full state variable $\mathcal X_{t}$ when applying a dimension reduction technique. We rather find a reduced model that is able to reproduce a \martinr{given} linear map of $\mathcal X_t$ represented by a fixed matrix \martinr{$L\in \mathbb R^{p\times n}$}, where $p$ is potentially much smaller than $n$. \martinr{This means that the reduced system is not designed to reproduce $\mathcal X_{t}$, but \begin{align}\label{output_rep}
\mathcal Y_t = L \mathcal X_t.                                                                                                                                                                                                                                                                                                                                                                                                                                                                                                                                                                                                                                                                                                                                                                                                                                                                                                                                                                                                                                                                                                                                                                                                                    \end{align}
The given matrix $L$ depends on the particular application. In the signature context, $L$ is usually learned from data of the nonlinear dynamics that we aim to approximate by an linear map of the signature.} This framework is motivated by the universal approximation property of the signature that tells us that nonlinear time dependent dynamics like rough or stochastic differential equations can be approximated by a linear map of the signature, see Sections \ref{sec:universal_app_sig} and particularly \ref{sec:dynamic-signature}.}
\martin{
\begin{remark}
Considering the general continuous semimartingale setting, using the notation of Remark \ref{ass:semimartingale} and assuming that $\widehat{X}$ takes values in $\mathbb R^{\hat d}$ the It\^{o} SDE in \eqref{ito_sig} is replaced by \begin{align*}
 \dd\mathcal X_{t} = \sum_{k=1}^{\hat d} N_k \mathcal X_{t} \dd \widehat{X}_t^k+0.5 \sum_{i, j=\hat d+1}^{d} N_i N_j \mathcal X_{t} k_{ij} \dd \widehat{X}_t^{c_{ij}}+ \sum_{k=\hat d+1}^{d}N_k \mathcal X_{t} \dd \bar X_{t}^k.                                                                                                                                                                                                                                                                                                                                             \end{align*}
Given that $\bar X$ is a Brownian motion, we have $\hat d=1$ and $\widehat{X}_t=t$ yielding \eqref{ito_sig}.
\end{remark}
}

\subsection{Identifying less relevant signature information}

\martin{The aim of the MOR procedure proposed in this paper is to find a low dimension process $\widetilde {\mathcal X}$ that satisfies a linear SDE and approximates \eqref{ito_sig} in the sense that a linear map of $\widetilde {\mathcal X}$ can accurately reproduce $\mathcal Y$ which usually is a linear map of the signature. Therefore, we derive computable algebraic objects that identify unimportant information in the SDE solved by ${\mathcal X}$ and further detect redundancies in the linear map $L{\mathcal X}$ determined by the matrix $L$.
We introduce the fundamental solution of the signature equation \eqref{ito_sig} in this context. It is} the process $\Phi(t, s)$ satisfying
\begin{align}\label{eq_fund_sol}
  \dd\Phi(t, s) = A \Phi(t, s) \dd t+ \sum_{i=2}^{d}N_i \Phi(t, s) \dd B_{t}^i,   \quad \Phi(s, s)= I\in \mathbb R^{n\times n},
\end{align}
with $s\in [0, T)$. Therefore, we can write $\mathcal X_t = \Phi(t, s) \mathcal X_{s}$ for all $t\in [s, T]$. Based on \martin{the fundamental solution}, we set
\begin{align}\label{P_Gram}
  P&:=\int_0^T \mathbb E\left[\mathcal X_u \mathcal X_u^\top\right] \dd u = \int_0^T \mathbb E\left[\Phi(u, 0)z z^\top \Phi(u, 0)^\top\right] \dd u, \\ \label{Q_Gram}
  Q&:= \int_0^T \mathbb E\left[\Phi(u, 0)^\top L^\top L \Phi(u, 0)\right] \dd u.
\end{align}
\martin{$P$ is defined to detect dominant subspaces in the SDE in \eqref{ito_sig}, whereas $Q$ is needed to identify which state directions are not important in $\mathcal Y= L \mathcal X$. For that reason, $P$ involves the initial state $z$ of the SDE and $Q$ contains the matrix $L$ that represents a linear map of the signature. In this context two representations of $\mathcal X$ are required using different orthonormal bases (ONB) of $\mathbb R^n$.
In particular, we choose ONB consisting of eigenvectors of \martin{$P$ and $Q$} to represent the truncated signature. Let $(p_k)_{k=1, \dots, n}$ be an ONB for $\mathbb R^n$ of eigenvectors of $P$ and $(q_k)_{k=1, \dots, n}$ the one associated with $Q$. \martinr{Since every element of $\mathbb R^n$ can be expanded with respect to any orthonormal basis, we have}
  \begin{align}\label{eigen_rep}
    \mathcal X_t= \sum_{k=1}^n \langle \mathcal X_t, p_{k}
    \rangle_{\mathbb R^n} \,p_k,\quad \mathcal X_t= \sum_{k=1}^n \langle \mathcal X_t, q_{k}
    \rangle_{\mathbb R^n} \,q_k, \quad t\in[0, T].
     \end{align}
\martinr{We focus on these two particular orthonormal bases because they allow us to identify basis vectors that can be neglected in the system dynamics. Specifically, we show below that certain basis vectors $p_k$ and $q_k$ have a negligible contribution in the first and second representation in \eqref{eigen_rep}, respectively.}
First, we find the directions $p_k$, in which $\mathcal X$ is small, since those are not relevant. In order to do so, we look at the associated coefficient  $\langle \mathcal X, p_{k}
    \rangle_{\mathbb R^n}$ in the following proposition.
\begin{proposition}\label{prop_estimates}
  Let $(\lambda_k)_{k=1, \dots, n}$ be the eigenvalues corresponding to the ONB $(p_k)_{k=1, \dots, n}$. Then, we have that
  \begin{align*}
    \mathbb E \int_0^T\langle \mathcal X_u, p_{k}\rangle_{\mathbb R^n}^2\dd u &= \lambda_{k}.
  \end{align*}
\end{proposition}
\begin{proof}
  This identity is trivial as $\mathbb E \int_0^T\langle \mathcal X_u, p_{k}\rangle_{\mathbb R^n}^2\dd u= p_{k}^\top \int_0^T \mathbb E[ \mathcal X_u \mathcal X_u^\top]\dd u\,p_k = p_k^\top P p_k = \lambda_k$ exploiting that $p_k$ has norm $1$.
\end{proof}
Second, we use \eqref{eigen_rep} to answer which directions $q_k$ \martinr{in $\mathcal{X}$ barely contribute to} $\mathcal Y = L \mathcal{X}$. To be more precise, we look at $\mathcal X_{s}$ for each $s\in [0, T)$ and investigate the impact on $\mathcal Y$ on the interval $[s, T]$ when omitting a direction $q_k$. In fact, we study the following difference
\begin{align}\label{y_dif}
 \mathcal Y_t - L\Phi(t, s)\sum_{j=1 \atop j\neq k}^n \langle \mathcal X_s, q_{j}
    \rangle_{\mathbb R^n} \,q_j =  \langle \mathcal X_s, q_{k}
    \rangle_{\mathbb R^n} \,L \Phi(t, s) q_k, \quad t\in[s, T],
\end{align}
for $k\in\{1, \dots, n\}$ using the second representation in \eqref{eigen_rep}  which yields that $\mathcal Y_t =L\Phi(t, s) \mathcal X_{s}= \sum_{j=1}^n \langle \mathcal X_{s}, q_{j}
   \rangle_{\mathbb R^n} \,L\Phi(t, s) q_j$. In the following proposition, the $L^2$-norm of the difference in \eqref{y_dif} is investigated. If it is small, it means that $q_k$ plays a minor role.
\begin{proposition}\label{prop_estimates2}
  Let $(\mu_k)_{k=1, \dots, n}$ be the eigenvalues corresponding to the ONB $(q_k)_{k=1, \dots, n}$. Then, we obtain that
  \begin{align*}
    \mathbb E \int_{s}^T\langle \mathcal X_{s}, q_{k}
    \rangle_{\mathbb R^n}^2\left\|L\Phi(u, s) q_k\right\|_{\mathbb R^p}^2 \dd u &\leq \mu_k  \mathbb E \langle \mathcal X_{s}, q_{k} \rangle_{\mathbb R^n}^2
  \end{align*}
for any $s\in [0, T)$.
\end{proposition}
\begin{proof}
  We note that $\mathcal X_{s}$ is $\mathcal F_{s}$-measurable and that $u\mapsto \Phi(u, s)$ only depends on increments of $B$ after time $s$. Therefore, they are independent and hence we obtain $\mathbb E [\langle \mathcal X_{s}, q_{k}  \rangle_{\mathbb R^n}^2\left\|L\Phi(u, s) q_k\right\|_{\mathbb R^p}^2]= \mathbb E \langle \mathcal X_{s}, q_{k}
  \rangle_{\mathbb R^n}^2\mathbb E\left\|L\Phi(u, s) q_k\right\|_{\mathbb R^p}^2$. Now, we obtain by substitution that
  \begin{align*}
    \mathbb E \int_{s}^T\left\|L\Phi(u, s) q_k\right\|_{\mathbb R^p}^2\dd u &= \mathbb E \int_{s}^T\left\|L\Phi(u-s, 0) q_k\right\|_{\mathbb R^p}^2 \dd u\\
    &=\int_{0}^{T-s}\mathbb E\left\|L\Phi(u, 0) q_k\right\|_{\mathbb R^p}^2\dd u \leq q_k Q q_k= \mu_k.
  \end{align*}
Above, we exploited that $\mathbb E\big[\Phi(u, s) q_k q_k^\top  \Phi(u, s)^\top\big]=\mathbb E\big[\Phi(u- s, 0) q_k q_k^\top \Phi(u-s, 0)^\top\big]$ as both expressions satisfy \eqref{lyap_ODE} with $M=q_k q_k^\top$. This concludes the proof.
\end{proof}
Proposition \ref{prop_estimates} tells us that $p_k$ has a low contribution to $\mathcal X_t$ if the associated eigenvalue $\lambda_k$ is small. Moreover, Proposition \ref{prop_estimates2}  indicates that neglecting $q_k$ in $\mathcal X_{s}$ has a minor impact on the quantity of interest $\mathcal Y$ after time $s$ if $\mu_k$ is small (unless $\mathcal X_{s}$ is very large in the direction $q_k$).} Consequently, we know that we can remove directions $p_k$ and $q_k$ that correspond to small eigenvalues. In order to be able to do this simultaneously, we construct a coordinate transform below that ensures that $p_k=q_k= \expn_k$. Here, $\expn_k$ is $k$-th canonical basis vector of $\mathbb R^n$. Therefore, unimportant directions can be identified with components of a transformed signature. Before stressing this aspect further, a strategy for the computation of $P$ and $Q$ is provided in the following section.

\subsection{Computation of $P$ and $Q$}\label{sec_comp_gram}

\martin{Computability of the reduced system is a crucial aspect of our approach.}
The later dimension reduction procedure relies on having $P$ and $Q$, \martin{defined in \eqref{P_Gram} and \eqref{Q_Gram}}, available. For that reason, let us briefly discuss how these matrices can be computed in practice. We introduce the Lyapunov operator
\begin{align*}
\mathcal L(Z)= A Z+Z A^\top + \sum_{i, j=2}^d N_i Z N_j^\top k_{ij},
\end{align*}
where $Z$ is a matrix of suitable dimension. \martin{$P$ and $Q$ will be computed from equations involving this operator (or its adjoint).  By definition in \eqref{P_Gram}, $P$ is the integral of an expectation. Therefore, we recall the following well-known result that states that this expectation solves a matrix ordinary differential equation.}
\begin{proposition}\label{prop_matrix_ode}
Given $M\in \mathbb R^{n\times n}$, the function $t\mapsto \mathbb E\big[\Phi(t, s) M \Phi(t, s)^\top\big]$ solves \begin{align}\label{lyap_ODE}
 \frac{\dd}{\dd t} Z_t=\mathcal L\left(Z_t\right), \quad Z_{s} =M,\quad t\geq s.                                                                                                                                                                                                                                     \end{align}
\end{proposition}
\begin{proof}
We define $\Phi_t:=\Phi(t, s)$ and make use of It\^{o}'s product rule yielding
\begin{align*} 	                                                                                                                                                                                                                                                                                                                                                                                                                                                                                                                                      
  \dd\big(\Phi_t M \Phi_t^\top\big) &=   \dd\big(\Phi_t\big)M \Phi^\top_t  + \Phi_t M \dd\big(\Phi_t^\top\big) +   \dd\left(\Phi_t \right)M \dd\big(\Phi_t^\top\big)\\
  &=\Big(A \Phi_t \dd t+\sum_{i=2}^d N_i \Phi_t \dd B^i_t\Big) M \Phi_t^\top + \Phi_t M \Big(\Phi_t^\top A^\top \dd t
  +\sum_{i=2}^d \Phi_t^\top N_i^\top \dd B^i_t\Big) \\
  &\quad+  \sum_{i,j=2}^d N_i \Phi_t M \Phi^\top_t N_{j}^\top k_{ij}\dd t.
  \end{align*}
Taking the expectation, we obtain $\frac{\dd}{\dd t}\mathbb E\big[\Phi_t M \Phi_t^\top\big]=\mathcal L\left(\mathbb E\big[\Phi_t M \Phi_t^\top\big]\right)$ \martin{using the Fubini theorem and the fact that the It\^o integral has zero mean. Both properties hold automatically in the Brownian setting. However, in a general semimartingale framework, additional assumptions are required to justify their use.}
\end{proof}
\martin{Let $\vect(M)=\big(\begin{matrix} \mathfrak m_1^\top & \mathfrak m_2^\top &\dots & \mathfrak m_n^\top \end{matrix}\big)^\top$ be the vectorization of a matrix $M$ with $n$ columns $\mathfrak m_1, \dots, \mathfrak m_n$.} Using its relation to the Kronecker product $\tilde\otimes$ between two matrices, \eqref{lyap_ODE} becomes
 \begin{align}\label{vec_lyap_ODE}
 \frac{\dd}{\dd t} \vect(Z_t)=\mathcal K\vect(Z_t), \quad \vect(Z_{s}) =\vect(M),\quad t\geq s,                                                                                                                                                                                                                                     \end{align}
 where the matrix representation of the Lyapunov operator $\mathcal L$ is
\begin{align*}
\mathcal K= A \,\tilde\otimes\, I+I\,\tilde\otimes\, A+\sum_{i, j=2}^d N_i\,\tilde\otimes\,  N_j k_{ij}.
\end{align*}
As $P$ defined in \eqref{P_Gram} is the integral of the solution of \eqref{lyap_ODE} with $s=0$ and $M=z z^\top$, we therefore have that \begin{align*}
 \vect(P) = \int_0^T \vect(Z_t) \dd t = \int_0^T\expm^{\mathcal K t} \vect(z z^\top) \dd t                                                                                                                                                                                                                                                 \end{align*}
inserting the solution representation for \eqref{vec_lyap_ODE}. 
 According to Remark \ref{rem:nilpotency-matrices}, the matrices $N_1, \dots, N_d$ are nilpotent with order $m+1$ leading to $\mathcal K^{j}=0$ for $j\geq 2 m+1$ and hence $\expm^{\mathcal K t} = \sum_{j=0}^{2m} \frac{t^{j}}{j!} \mathcal K^j$.
\martin{\begin{remark}
The degree of nilpotency of $\mathcal K$ can be argued as follows. It is determined by $A \,\tilde\otimes\, I+I\,\tilde\otimes\, A$, since this term involves the non-nilpotent identity matrix. As $A \,\tilde\otimes\, I$ and $I\,\tilde\otimes\, A$ commute, we find that \begin{align*}
\big(A \,\tilde\otimes\, I+I\,\tilde\otimes\, A\big)^n= \sum_{k=0}^n \binom{n}{k}   \big(A \,\tilde\otimes\, I\big)^k \big(I\,\tilde\otimes\, A\big)^{n-k}  =   \sum_{k=0}^n \binom{n}{k} A^k\,\tilde\otimes\, A^{n-k}.                                                                                                                                                                                                                                                         \end{align*}
Using Remark \ref{rem:nilpotency-matrices}, we know that $N_1$ and hence also $A$ is nilpotent of degree $m+1$. Therefore, given that $0\leq n\leq 2m$ there is a $k=0, 1, \dots, n$ such that $A^k\,\tilde\otimes\, A^{n-k} \neq 0$. If $n=2m + 1$, we obtain that $A^k=0$ for $k=m+1, \dots, 2 m+1$ and that $A^{2m+1-k} = 0$ for $k=0, \dots, m$ leading to $\big(A \,\tilde\otimes\, I+I\,\tilde\otimes\, A\big)^{2m+1}=0$.
 \end{remark}
}
Exploiting the nilpotency of $\mathcal K$ now yields
\begin{align}\label{vec_P}
 \vect(P) = \sum_{j=0}^{2m} \frac{T^{j+1}}{(j+1)!} \mathcal K^j\vect(z z^\top).                                                                                                                                                                                                                                                  \end{align}
Although $\mathcal K$ potentially is a huge matrix, $P$ can be computed from \eqref{vec_P} since $\mathcal K$ is extremely sparse making the matrix-vector multiplication cheap. In order to compute $P$ for very large $n$, it is beneficial to devectorize \eqref{vec_P} leading to the explicit representation \begin{align}\label{rep_P}
P = \sum_{j=0}^{2m} \frac{T^{j+1}}{(j+1)!} \mathcal L^j(z z^\top).                                                                                                                                                                                                                                                  \end{align}

The computation of $Q$ can be conducted using similar arguments. Vectorizing \eqref{Q_Gram} yields
  \begin{align*}
 \vect(Q)&= \int_0^T \mathbb E\Big[\vect\Big(\Phi(u, 0)^\top L^\top L \Phi(u, 0)\Big)\Big] \dd u\\
 &= 
 \int_0^T \mathbb E\Big[\Phi(u, 0)^\top \,\tilde\otimes\,  \Phi(u, 0)^\top\Big] \vect\Big(L^\top L\Big) \dd u.
 \end{align*}
Based on Proposition \ref{prop_matrix_ode} and \eqref{vec_lyap_ODE}, we know that \begin{align*}                                                                            \expm^{\mathcal K (t-s)}\vect(M)=\vect\Big(\mathbb E\Big[\Phi(t, s) M \Phi(t, s)^\top\Big]\Big) = \mathbb E\Big[\Phi(t, s) \,\tilde\otimes\,  \Phi(t, s)\Big] \vect(M).                                                                                  \end{align*}
Since this identity is true for all matrices $M$, we find that $ \mathbb E\Big[\Phi(t, s) \,\tilde\otimes\,  \Phi(t, s)\Big] =\expm^{\mathcal K (t-s)}$ and consequently
\begin{align}\label{comp_Q}
 \vect(Q)=
 \int_0^T \expm^{\mathcal K^\top u} \vect\Big(L^\top L\Big) \dd u=\sum_{j=0}^{2m} \frac{T^{j+1}}{(j+1)!} (\mathcal K^\top)^j\vect(L^\top L)
 \end{align}
using once more that $\mathcal K^{j}=0$ for $j\geq 2m+1$. Relation \eqref{comp_Q} now is the basis for the computation of $Q$. Equivalently, we can write \begin{align}\label{rep_Q}
Q
 =\sum_{j=0}^{2m} \frac{T^{j+1}}{(j+1)!} (\mathcal L^*)^j(L^\top L),
 \end{align}
where the Lyapunov operator's adjoint  w.r.t. the Frobenius inner product is \begin{align*}
 \mathcal L^*(Z)= A^\top Z+Z A + \sum_{i, j=2}^d N_i^\top Z N_j k_{ij}.                                                                                           \end{align*}
%

\subsection{Reduced order signature approximation}
\label{sec:reduced-order-signature}

We conduct a state space transformation via a nonsingular matrix $\mathcal T\in \mathbb R^{n\times n}$. To do so, we define a new state variable $\hat {\mathcal X}_t =\mathcal T \mathcal X_t$ and  associated coefficients $(\hat A, \hat N_i, \hat L)=(\mathcal T A \mathcal T^{-1}, \mathcal T N_i\mathcal T^{-1}, L\mathcal T^{-1})$. The purpose of this transformation is that an equivalent system (same quantity of interest $\mathcal Y$) is supposed to be obtained, in which the redundant information can be removed easily by truncation of state components. In particular, $\mathcal T$ is chosen in a way that it simultaneously diagonalizes $P$ and $Q$. Hence, the corresponding eigenvectors $(p_k)$ and $(q_k)$ are the canonical basis $(\expn_k)$ of $\mathbb R^n$ at the same time. Exploiting Propositions \ref{prop_estimates} and \ref{prop_estimates2}, we can then identify unimportant components $\langle \hat {\mathcal X}_{\cdot}, \expn_{k}\rangle_{\mathbb R^n}$ of $\hat {\mathcal X}$ and truncate those. Let us refer to the discussion below these propositions once more and provide further details below.\smallskip

First, we can conclude that the modified signature $\hat {\mathcal X}$ fulfills \begin{equation}
  \label{transformed_sig}
  \dd\hat {\mathcal X}_{t} = \hat A \hat {\mathcal X}_{t} \dd t+ \sum_{i=2}^{d}\hat N_i \hat {\mathcal X}_{t} \dd B_{t}^i,   \quad \hat {\mathcal X}_{0}=\mathcal T z, \quad \mathcal Y_t = \hat L \hat {\mathcal X}_t, \quad t\in [0, T],
\end{equation}
meaning that $\mathcal Y$ is invariant under this transformation. \martinr{Further note that the entries of $\hat{\mathcal X}$ are now linear combinations of iterated integrals given that $\mathcal X$ represents a truncated signature.} We see directly from \eqref{eq_fund_sol} that the fundamental solution of \eqref{transformed_sig} is $\hat \Phi(t, s)= \mathcal T \Phi(t, s)\mathcal T^{-1}$. Consequently, the time-averaged \martin{quadratic forms} of \eqref{transformed_sig} are
\begin{equation}
 \label{transformed_gram}
 \begin{aligned}                                                                     \hat P&:= \int_0^T \mathbb E\left[\hat \Phi(u, 0)\mathcal T z (\mathcal Tz)^\top \hat \Phi(u, 0)^\top\right] \dd u= \mathcal  T P \mathcal T^\top, \\
   \hat Q&:= \int_0^T \mathbb E\left[\hat \Phi(u, 0)^\top \hat L^\top \hat L \hat \Phi(u, 0)\right] \dd u = \mathcal T^{-\top} Q \mathcal T^{-1}.
 \end{aligned}
\end{equation}
The following proposition states the particular transformation required for a simultaneous diagonalization of the time-averaged \martin{quadratic forms}.
\begin{prop}\label{prop_bal}
Given that $P$ and $Q$ are positive definite, we obtain  $\hat P=\hat Q = \Sigma= \diag(\sigma_1,\ldots,\sigma_n)$ using the balancing transformation \begin{equation}\label{bal_transform}
  \mathcal T=\Sigma^{\frac{1}{2}} U^\top L_P^{-1},
\end{equation} 
with the factorization $P=L_PL_P^\top$ and the spectral decomposition $L_P^\top QL_P=U\Sigma^2 U^\top$, where $U$ is orthogonal. Moreover, $\sigma_i^2$ are the eigenvalues of $PQ$.
\end{prop}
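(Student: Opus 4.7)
The plan is to verify directly that the proposed $\mathcal T$ achieves the balancing, by substituting it into the transformation formulas~\eqref{transformed_gram}. Concretely, I would first note that since $P$ is symmetric positive definite, a factorization $P = L_P L_P^\top$ exists (e.g.\ Cholesky, or $L_P = P^{1/2}$) with $L_P$ invertible. Then $L_P^\top Q L_P$ is symmetric positive definite, so it admits an orthogonal spectral decomposition $L_P^\top Q L_P = U \Sigma^2 U^\top$ with $U U^\top = I$ and $\Sigma^2$ diagonal with strictly positive entries; this legitimizes writing $\Sigma^{1/2}$ and $\Sigma^{-1/2}$ in the sequel.

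Next I would simply plug $\mathcal T = \Sigma^{1/2} U^\top L_P^{-1}$ and $\mathcal T^{-1} = L_P U \Sigma^{-1/2}$ into the two identities in~\eqref{transformed_gram}. For $\hat P$ the computation is
\begin{align*}
\hat P = \mathcal T P \mathcal T^\top = \Sigma^{1/2} U^\top L_P^{-1} (L_P L_P^\top) L_P^{-\top} U \Sigma^{1/2} = \Sigma^{1/2} U^\top U \Sigma^{1/2} = \Sigma,
\end{align*}
using orthogonality of $U$. For $\hat Q$ one gets
\begin{align*}
\hat Q = \mathcal T^{-\top} Q \mathcal T^{-1} = \Sigma^{-1/2} U^\top L_P^\top Q L_P U \Sigma^{-1/2} = \Sigma^{-1/2} U^\top (U \Sigma^2 U^\top) U \Sigma^{-1/2} = \Sigma,
\end{align*}
so both transformed Gramians coincide with $\Sigma$.

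For the final statement on the eigenvalues of $PQ$, the key observation is that the transformation laws in~\eqref{transformed_gram} imply
\begin{align*}
\hat P \hat Q = \mathcal T P \mathcal T^\top \mathcal T^{-\top} Q \mathcal T^{-1} = \mathcal T (PQ) \mathcal T^{-1},
\end{align*}
so $PQ$ is similar to $\hat P \hat Q = \Sigma \cdot \Sigma = \Sigma^2$ and therefore shares its eigenvalues, which are exactly $\sigma_1^2, \ldots, \sigma_n^2$.

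I do not foresee a genuinely hard step here; the result is classical in balanced truncation and the proof is essentially a bookkeeping exercise. The only place requiring mild care is the justification that all objects are well-defined: invertibility of $L_P$ and positivity of the $\sigma_i^2$ both follow from the standing assumption that $P$ and $Q$ are positive definite (so that $L_P^\top Q L_P$ is positive definite and has strictly positive eigenvalues). If one wanted to relax positive definiteness to positive semi-definiteness, one would have to work on the appropriate quotient/range spaces, but under the stated hypothesis this complication does not arise.
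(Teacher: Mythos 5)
Your proof is correct and proceeds by essentially the same direct verification as the paper: substituting $\mathcal{T}$ and $\mathcal{T}^{-1} = L_P U \Sigma^{-1/2}$ into~\eqref{transformed_gram} and simplifying. The only (minor) difference is in the final eigenvalue claim: you observe that the transformation laws give $\hat P \hat Q = \mathcal{T}(PQ)\mathcal{T}^{-1}$, so $PQ$ is similar to $\Sigma^2$, whereas the paper notes that $\sigma_i^2$ are by construction the eigenvalues of $L_P^\top Q L_P$ and then uses the standard fact that $L_P^\top Q L_P$ and $L_P L_P^\top Q = PQ$ share a spectrum (the $AB$ versus $BA$ identity). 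Both routes are elementary and equivalent under the positive definiteness hypothesis; yours has the small aesthetic advantage of reusing the already-computed $\hat P$ and $\hat Q$, while the paper's is a one-liner needing no inverse of $\mathcal{T}$.
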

\begin{proof}
 Inserting \eqref{bal_transform} in \eqref{transformed_gram}  yields $\hat P= \Sigma^{\frac{1}{2}} U^\top L_P^{-1} P L_P^{-\top} U \Sigma^{\frac{1}{2}}=\Sigma$ and $\hat Q= \Sigma^{-\frac{1}{2}} U^\top L_P^{\top} Q L_P U \Sigma^{-\frac{1}{2}}=\Sigma$. By definition, $\sigma_i^2$ are the eigenvalues of $L_P^\top QL_P$. Exploiting that $L_P^\top QL_P$ has the same spectrum like $L_PL_P^\top Q = PQ$ concludes the proof.
\end{proof}
Given that we have used the balancing transformation \eqref{bal_transform} in \eqref{transformed_sig}, we can identify less relevant directions with components of $\hat {\mathcal X}$ that are associated with small eigenvalues $\sigma_k^2$ of $P Q$. This is a consequence of Propositions \ref{prop_estimates} and \ref{prop_estimates2}, where the transformation ensures that $p_k=q_k=\expn_k$ is the $k$-th column of the $n\times n$ identity matrix and $\lambda_k=\mu_k=\sigma_k$. \martinr{In more detail, Proposition  \ref{prop_estimates} yields \begin{align*}
    \mathbb E \int_0^T\langle \hat {\mathcal X}_u, \expn_{k}\rangle_{\mathbb R^n}^2\dd u &= \sigma_{k}.
  \end{align*}
Hence, if $\sigma_k$ is small, the $k$-th component $\langle \hat{\mathcal X},\expn_k\rangle_{\mathbb R^n}$ of $\hat{\mathcal X}$ has a small contribution in the mean-square sense and can therefore be neglected in the transformed SDE in \eqref{transformed_sig}. Moreover, the corresponding basis vector $\expn_k$ also has only a negligible contribution to the quantity of interest
  $\mathcal Y_t =\hat L\hat \Phi(t, s) \hat{\mathcal X}_{s}= \sum_{j=1}^n \langle \hat{\mathcal X}_{s}, \expn_{j}
   \rangle_{\mathbb R^n} \,\hat L\hat \Phi(t, s) \expn_j$.
Indeed, for any $s\in[0,T)$, Proposition \ref{prop_estimates2} implies
   \begin{align*}
    \mathbb E \int_{s}^T\langle \hat{\mathcal X}_{s}, \expn_{k}
    \rangle_{\mathbb R^n}^2\left\|\hat L\hat \Phi(u, s) \expn_k\right\|_{\mathbb R^p}^2 \dd u &\leq \sigma_k  \mathbb E \langle \hat {\mathcal X}_{s}, \expn_{k} \rangle_{\mathbb R^n}^2.
  \end{align*}
Thus, small values of $\sigma_k$ imply that the corresponding directions contribute only marginally to both the dynamics and the output quantity. }
For that reason, the spectrum of $P Q$ delivers a good truncation criterion and therefore the intuition on how to fix the reduced dimension $\widetilde n\ll n$. In order to find the reduced equation, we partition the solution of \eqref{transformed_sig} with $\mathcal T$ like in  \eqref{bal_transform} as follows $\hat{\mathcal X}_t=\begin{bmatrix}
\hat{\mathcal X}_t^1\\ \hat{\mathcal X}_t^2                                         
\end{bmatrix}$. $\hat{\mathcal X}_t^1$ taking values in $\mathbb R^{\widetilde n}$ corresponds to the large values
  $\sigma_1,\ldots,\sigma_{\widetilde n}$ and $\hat{\mathcal X}_t^2$ to the small values $\sigma_{\widetilde n+1},\ldots,\sigma_n$ in the sense that $\sigma_{\widetilde n+1}\ll \sigma_{\widetilde n}$. The respective partition of the balanced coefficients
  \begin{equation}\label{part_bal}
  \begin{aligned}
\hat A= \begin{bmatrix}{\widetilde A}&\star\\ 
\star&\star\end{bmatrix},\quad \mathcal T z &= \begin{bmatrix}{\widetilde z}\\\star\end{bmatrix},\quad \hat N_{i}= \begin{bmatrix}{\widetilde N}_{i}&\star\\ 
\star&\star\end{bmatrix}, \quad \hat L= \begin{bmatrix}{\widetilde L} &
\star\end{bmatrix}
  \end{aligned}
  \end{equation}
with $\widetilde A, \widetilde N_i\in \mathbb R^{\widetilde n \times \widetilde n}$, $\widetilde z\in \mathbb R^{\widetilde n}$ and $\widetilde L\in \mathbb R^{p \times \widetilde n}$ leads to the reduced system \begin{equation}
  \label{reduced_sig}
  \dd\widetilde{\mathcal X}_{t} = \widetilde A \widetilde {\mathcal X}_{t} \dd t+ \sum_{i=2}^{d}\widetilde N_i \widetilde {\mathcal X}_{t} \dd B_{t}^i,   \quad \widetilde {\mathcal X}_{0}=\widetilde z, \quad \widetilde {\mathcal Y}_t = \widetilde L \widetilde {\mathcal X}_t, \quad t\in [0, T].
\end{equation}
In detail, the reduced model \eqref{reduced_sig} is obtained by removing the equation for $\hat{\mathcal X}_t^2$ in \eqref{transformed_sig} and by setting $\hat{\mathcal X}_t^2=0$ in the dynamics of $\hat{\mathcal X}_t^1$. This is motivated by the minor relevance of $\hat{\mathcal X}_t^2$ leading to a reduced output $\widetilde {\mathcal Y}\approx \mathcal Y$. In particular, choosing $p=1$ and $z=\big(\begin{matrix} 1 & 0 &\dots & 0 \end{matrix}\big)^\top$, the reduced variable $\widetilde {\mathcal X}$ is a candidate for $\widetilde \sigX$ in \eqref{form_aim}.
\martinr{
\begin{remark}
Suppose that the solution of \eqref{ito_sig} represents a truncated signature that is used to learn the dynamics of a nonlinear SDE. If many of the eigenvalues $\sigma_k^2$ of $PQ$ are small, then our results show that the reduced system \eqref{reduced_sig} provides a suitable surrogate for the learning task. Rather than first learning the matrix $L$ in \eqref{output_rep} from data, one may instead learn the dynamics directly by calibrating the coefficients of the reduced system \eqref{reduced_sig}. This replaces the learning of an output map by the identification of the parameters of the reduced-order model.
\end{remark}
}
\subsection{Algebraic error representation}

It is our goal to approximate $\mathcal Y$ in \eqref{output_rep} by the reduced output $\widetilde {\mathcal Y}$ defined in \eqref{reduced_sig}. It is essential to characterize the error of this approach without computing $\mathcal Y$, since this quantity is too computationally involved or not even available. Below, we derive an algebraic representation of the MOR error that can be computed and does not require any sampling of the underlying stochastic processes.
\martinr{
\begin{proposition}
Let $\mathcal Y$ in \eqref{output_rep} be the output of the full model \eqref{ito_sig} and $\widetilde {\mathcal Y}$ be the one of the reduced system \eqref{reduced_sig}. Then, we have
\begin{align}\label{error_rep}
			\int_0^T\mathbb{E}\|\mathcal Y_t-\widetilde {\mathcal Y}_t \|^2_{\mathbb R^p} \dd t = \trac\big( L P\, L^\top\big) + \trac\big(\widetilde L \widetilde P \widetilde L^\top\big)-2 \trac\big(L P_2 \widetilde L^\top\big),
		\end{align}
where $P$ is given by \eqref{P_Gram}, $\widetilde{P}=\int_0^T\widetilde{Z}_t\dd t$ and $P_2=\int_0^T Z_{2, t}\dd t$, where $\widetilde Z$ and $Z_2$ satisfy \begin{align}\label{odebar}
 \frac{\dd}{\dd t} \widetilde Z_t&=\widetilde{\mathcal L}\left(\widetilde Z_t\right), \quad \widetilde Z_{0} =\widetilde z\widetilde z^\top,\quad t\in [0, T],\\ \label{odetilde}
 \frac{\dd}{\dd t} Z_{2, t}&=\mathcal L_2\left(Z_{2, t}\right), \quad Z_{2, 0} =z\widetilde z^\top,\quad t\in [0, T],
\end{align}
with operators $\widetilde{\mathcal L}(\widetilde Z)= \widetilde A \widetilde Z+\widetilde Z \widetilde A^\top + \sum_{i, j=2}^d \widetilde N_i \widetilde Z \widetilde N_j^\top k_{ij}$ and $\mathcal L_2(Z_2)= A Z_2+Z_2 \widetilde A^\top + \sum_{i, j=2}^d N_i Z_2 \widetilde N_j^\top k_{ij}$.
\end{proposition}
\begin{proof}
First, we observe that
\begin{equation}\label{starting_est}
\begin{aligned}
			\int_0^T\mathbb{E}\|\mathcal Y_t-\widetilde {\mathcal Y}_t \|^2_{\mathbb R^p} \dd t&=\mathbb{E}\int_0^T\big\|L\Phi(t,0)z-\widetilde L\widetilde{\Phi}(t,0)\widetilde z\big{\|}_{\mathbb R^p}^2\dd t \\
			&=\mathbb{E}\int_0^T\big\|L^e \Phi^e(t, 0) z^e\big\|_{\mathbb R^p}^2 \dd t\\ &=\trac\big(L^e \int_0^T\mathbb{E}\big[\Phi^e(t, 0) z^e{z^e}^\top\Phi^e(t, 0)^\top\big]\dd t\, {L^e}^\top\big)\\
			&=\trac\big(L^e \int_0^T Z^e_t \dd t\,{L^e}^\top\big),
	\end{aligned}
	\end{equation}
where $Z^e_t = \mathbb{E}\left[\Phi^e(t, 0) z^e{z^e}^\top{\Phi^e}^\top(t, 0)\right]$, $\widetilde{\Phi}$ is the fundamental solution of the reduced model \eqref{reduced_sig} and
\begin{align*}
	z^e=\left(\begin{matrix} z\\ {\widetilde z}\end{matrix}\right),\quad L^e = \left(\begin{matrix} L & {-\widetilde L}\end{matrix}\right),\quad \Phi^e= \left(\begin{matrix}\Phi & 0\\0 &\widetilde{\Phi}\end{matrix}\right).
  \end{align*}
We notice that $\Phi^e$ is the fundamental solution of a system like \eqref{ito_sig} with coefficients
\begin{align*}
	A^e=\left(\begin{matrix} A& 0\\0 &{\widetilde A}\end{matrix}\right), \quad N_i^e=\left(\begin{matrix} N_i& 0\\0 & {\widetilde N}_{i}\end{matrix}\right).
\end{align*}
Therefore, we know by Proposition \ref{prop_matrix_ode} that $Z^e$ satisfies \begin{align}\label{matrixequalforF_error}
\frac{\dd}{\dd t} Z^e_t=\mathcal L^e\left(Z^e_t\right), \quad Z^e_{0} =z^e{z^e}^\top,\quad t\in [0, T],
	\end{align}
with $ \mathcal L^e\left(Z^e\right):=A^eZ^e+Z^e{A^e}^\top+ \sum_{i, j=2}^d N_i^e Z^e{N_j^e}^\top k_{ij}$.
From \eqref{matrixequalforF_error}, it can be seen that the left upper $n\times n$ block of $Z^e$ is the solution $Z$ of \eqref{lyap_ODE} with initial state $zz^\top$. Furthermore,  the right lower  $\widetilde n\times \widetilde n$ block $\widetilde Z$ and the right upper $n\times \widetilde n$ block $Z_2$ of $Z^e$ satisfy \eqref{odebar} and \eqref{odetilde}, respectively.
 From \eqref{starting_est} and the partition $Z^e= \left(\begin{matrix} Z & {Z_2}\\ {Z_2^\top} & {\widetilde Z} \end{matrix}\right)$, we obtain the result of this proposition.
\end{proof}
}
Identity \eqref{error_rep} is now used to calculate the $L^2$-error of the MOR procedure. It is easily accessable, since $P$ is already available as a crucial object in the reduced system computation. Moreover, $\widetilde P$ and $P_2$ are integrals of the solutions of \eqref{odebar} and \eqref{odetilde}. Therefore, they can be computed analogue to $P$, see Section \ref{sec_comp_gram}. On the other hand, $\widetilde P$ and $P_2$ are much easier to obtain as the are low-dimensional objects (depending on $\widetilde n$) in contrast to $P$.


\section{Numerical examples}
\label{sec:numerical-examples}

\subsection{Bergomi model}
\label{sec:bergomi-model}

In the first example, we use a standard stochastic volatility model, namely the \emph{Bergomi model} \cite{bergomi2015stochastic}, as our starting point.
The Bergomi model is highly praised for its flexibility, and the ability to accurately fit equity markets, even with relatively few parameters.
More concretely, the $n$-factor Bergomi model is given by
\begin{subequations}\label{bergomi_model}
\begin{gather}
    \label{eq:bergomi-stock}
    \dd S_t = \sqrt{\xi^t_t} S_t \dd Z_t,\\
    \label{eq:bergomi-forward-variance}
    \dd_t \xi_t^T = \frac{\omega}{\sqrt{\sum_{i,j=1}^n w_i w_j \rho_{ij}}} \xi^T_t \sum_{i=1}^n w_i \mathrm{e}^{-k_i(T-t)} \dd W^i_t,
\end{gather}
\end{subequations}
with initial values $S_0$ for the stock price and an initial forward variance curve $\xi_0^\cdot$.
Here, $Z, W^1, \ldots, W^n$ are correlated standard Brownian motions, where $\rho$ denotes the correlation matrix of $(W^1, \ldots, W^n)$ and -- following \cite{guyon2022vix} -- we denote the correlation between the Brownian motion $Z$ driving the stock price and the Brownian motion $W^i$ by $\rho_{Si}$. \blue{Given that we are mainly interested in derivative pricing for the purposes of the examples, we are always working under a pricing measure and consider forward prices.}

\begin{remark}
    \label{rem:bergomi}
    As indicated above, the Bergomi model is a \emph{forward variance model}, i.e., it models the whole forward variance curve $\xi_t^T \coloneqq \mathbb{E}[v_T \mid \mathcal{F}_t]$, $T \ge t \ge 0$, not just the instantaneous variance $v_t = \xi_t^t$.
    This yields great flexibility for calibration, noting that the initial forward variance curve $\xi_0^\cdot$ -- an essential parameter of the model -- can be read out from market data (variance swaps or vanilla option prices using the \emph{log-strip formula}), and, hence, does not need to be calibrated in principle.
    The specific dynamics of the forward variance in \eqref{eq:bergomi-forward-variance} corresponds to taking the exponential of a weighted sum of Ornstein-Uhlenbeck processes.
\end{remark}

In the first numerical example, we take a two-factor Bergomi model, following the parameterization of \cite[Section 3]{guyon2022vix}, which is a variant of the parameter Set I in \cite[p.~229]{bergomi2015stochastic} with constant initial forward variance curve. Specifically, we choose the parameters presented in Table~\ref{tab:bergomi-parameters}. We here use the notation $\theta_1 = w_1$, and the convention $w_2 = 1 - \theta_1$.

\begin{table}[!htpb]
    \centering
    \begin{tabular}{c|c|c|c|c|c|c|c|c}
        $\omega$ & $k_1$ & $k_2$ & $\theta_1$ & $\rho_{12}$ & $\rho_{S1}$ & $\rho_{S2}$ & $S_0$ & $\xi_0^\cdot$ \\
        \hline
        $3$ & $2.63$ & $0.42$ & $0.69$ & $0.7$ & $-0.9$ & $-0.9$ & $1$ & $0.04$\\
    \end{tabular}
    \caption{Parameters for the Bergomi model used in our numerical example following \cite[Section 3]{guyon2022vix}.}
    \label{tab:bergomi-parameters}
\end{table}

In our first numerical example, we take the Bergomi model as ground-truth\christian{, and fit a signature model to it, which we then reduce following our MOR procedure. In more detail, we proceed according to the following steps.}
\begin{enumerate}
    \item First we train a signature model in the sense of \cite{cuchiero2022signature} with different truncation levels $m$ \christian{minimizing the $L^2$-error between the terminal stock prices $\mathbb{E}\left[ \norm{S_T - S^{\ell}_T}^2 \right]$ for $ S^{(\ell)}_t = \ip{\ell}{\sigX^{\le m}_{0,t}}$, see Section~\ref{sec:exampl-sign-models}. Following the approach of \cite{cuchiero2022signature}, the resulting signature model is free of arbitrage.}
    \item We then vectorize the truncated signature $\sigX^{\le m}_{0,t}$ and formulate the It\^{o}-SDE \eqref{ito_sig} with respective initial state, so that $S^{(\ell)}_t = \mathcal Y_t = L \mathcal X_t$, \christian{for a suitable row-matrix $L$.}
    \item Finally, we approximate the truncated signature by the method described in Section~\ref{sec:reduced-order-signature} and obtain a reduced order signature approximation $S^{(\ell)}_t \approx \widetilde {\mathcal Y}_t= \widetilde{L} \widetilde{\mathcal{X}}_t$ from \eqref{reduced_sig}. \christian{As noted earlier, $\widetilde{\mathcal{X}}_t$ is simply the solution to a (low-dimensional) linear system of SDEs.}
\end{enumerate}
In the specific example of the two-dimensional Bergomi model, we choose the underlying state process $X_t = (t, Z_t, W^1_t, W^2_t)$, i.e., we consider the truncated signature $\sigX^{\le m}_{0,t}$ at level $m$ of a $d$-dimensional process with $d=4$ and $m=5$. This results in an It\^{o}-SDE \eqref{ito_sig} with state dimension $n=1365$ with quantity of interest $\mathcal Y_t$. We apply the dimension reduction scheme of Section \ref{sec:dimension-reduction} to \eqref{ito_sig} in order to obtain a reduced system \eqref{reduced_sig} of order $\widetilde n \ll n$ with quantity of interest $\widetilde {\mathcal Y}_t$. The values $\sigma_k:=\sqrt{\eig_k(PQ)}$ introduced in Proposition  \ref{prop_bal} provide an algebraic criterion for a good choice of $\widetilde n$. According to Section \ref{sec:reduced-order-signature}, the smaller $\sigma_k$, the less important the associated state component in the balanced system \eqref{transformed_sig} is.
\begin{figure}[ht]
 \begin{minipage}{0.45\linewidth}
  \hspace{-0.5cm}
 \includegraphics[width=1.0\textwidth,height=5cm]{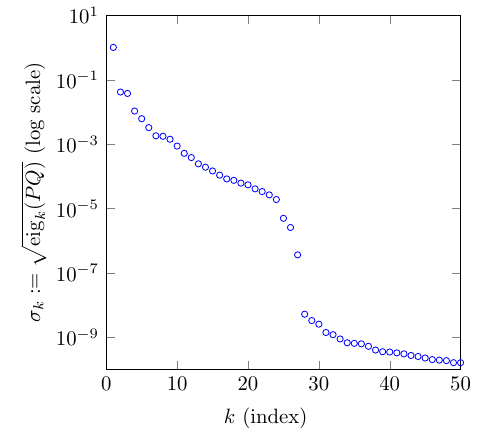}
 \caption{Square root of first $50$ out of $n=1365$ eigenvalues of $PQ$ for signature model associated with \eqref{bergomi_model}.}\label{plot_HSV}
 \end{minipage}\hspace{0.5cm}
 \begin{minipage}{0.45\linewidth}
  \hspace{-0.5cm}
 \includegraphics[width=1.0\textwidth,height=5cm]{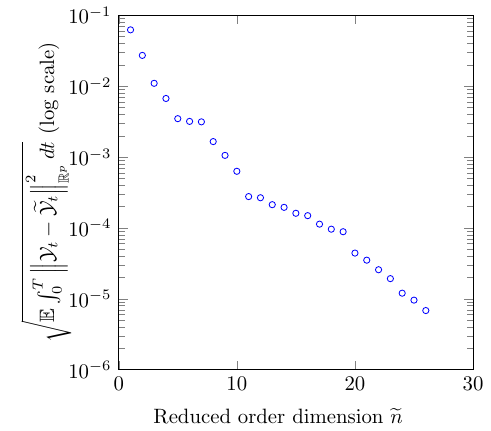}
 \caption{$L^2$-error between output of the signature model of \eqref{bergomi_model} ($n=1365$) and reduced system output for $\widetilde n=1, \dots, 26$.}\label{plot_L2error}
 \end{minipage}
 \end{figure}
Figure \ref{plot_HSV} shows $\sigma_k$ for $k\in \{1, \dots, 50\}$. We observe a strong decay in $k$ and notice that $\sigma_{28}<10^{-8}$ which is below the machine precision. Consequently, \eqref{ito_sig} has a high reduction potential and allows for an exact approximation in case of choosing $\widetilde n=27$. The corresponding reduction errors $\sqrt{\mathbb E \int_{0}^T \left\|\mathcal Y_t - \widetilde {\mathcal Y}_t \right\|_{\mathbb R^p}^2dt}$ can be found in Figure \ref{plot_L2error}. \martin{It is important to notice that this $L^2$-error is not calculated from a Monte-Carlo approach, but from the algebraic error representation in \eqref{error_rep}.} We have a true approximation error for $\widetilde n<27$. If $\widetilde n\geq 27$, the error is numerically zero.
 \begin{figure}[ht]
  \hspace{-0.5cm}
 \includegraphics[width=0.33\textwidth,height=4cm]{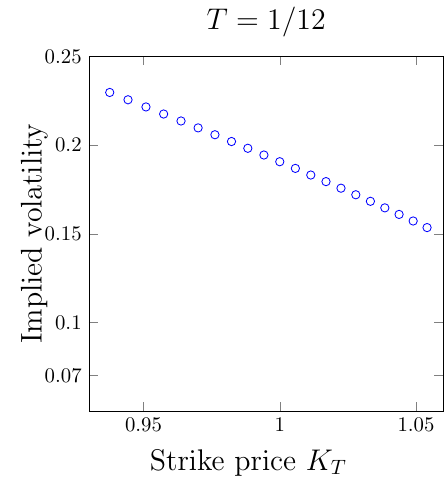}
 \includegraphics[width=0.33\textwidth,height=4cm]{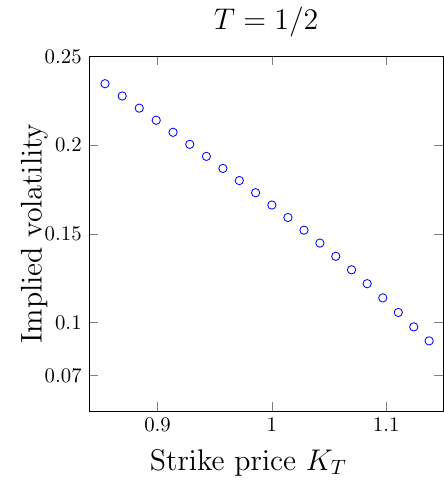}
 \includegraphics[width=0.33\textwidth,height=4cm]{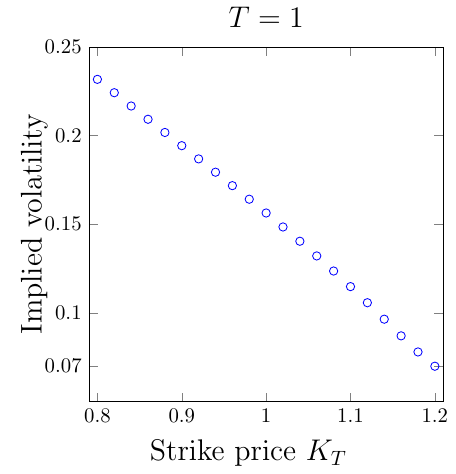}
 \caption{Implied volatilities \martin{of the signature-based approximation $S^{(\ell)}$} of \eqref{bergomi_model} for  $T=1/12, 1/2, 1$ and strike prices $K_T=(0.8+j\cdot 0.02)^{\sqrt{T}}$ with $j=0, 1, \dots, 20$.}\label{imVol_r27_differentT}
 \end{figure}
In addition to the $L^2$-error, the quality of the reduced signature system is tested in a finance context. First, we determine the fair price $\mathbb E\left[ \max\{S^{(\ell)}_T - K_T, 0\}\right]$ of a European option, where the values of the strike price $K_T=(0.8+j\cdot 0.02)^{\sqrt{T}}$ ($j=0, 1, \dots, 20$) are chosen depending on the maturity $T$. The computed prices are then treated as prices from a Black-Scholes model and the associated volatilities are derived (while the interest rate is zero).  These quantities are called implied volatilities (IV). The IV \martin{of $S^{(\ell)}$} are depicted in Figure \ref{imVol_r27_differentT} for $T=1/12, 1/2, 1$.
\begin{figure}[ht]
  \centering
\begin{tikzpicture}
	\begin{customlegend}[legend columns=3, legend style={/tikz/every even column/.append style={column sep=1.0cm}} , legend entries={\,Reduced dim. $\widetilde n=5$
, \,Reduced dim. $\widetilde n=11$ , \,Reduced dim. $\widetilde n=27$}, ]
	\addlegendimage{blue,solid,line width = 1pt, mark options={scale=1.75}, only marks,mark=o}      \addlegendimage{red,line width = 1pt,mark options={scale=1.75}, only marks, mark = x} \addlegendimage{green,line width = 1pt,mark options={scale=1.75}, only marks, mark = star}
	\end{customlegend}
	\end{tikzpicture}
 \includegraphics[width=0.326\textwidth,height=4cm]{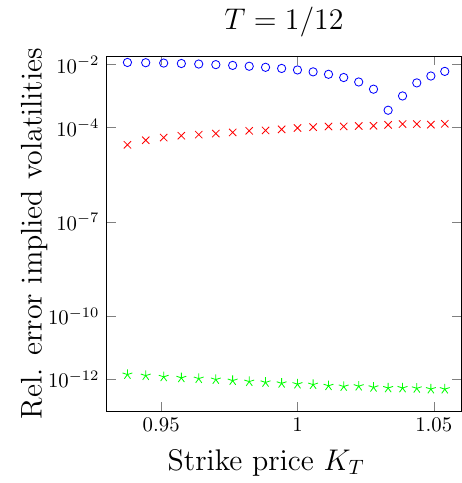}
 \includegraphics[width=0.326\textwidth,height=4cm]{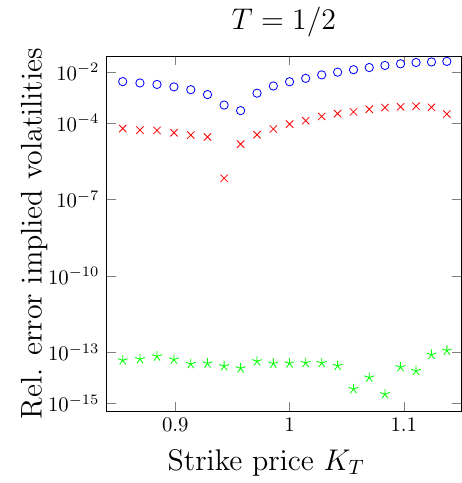}
 \includegraphics[width=0.326\textwidth,height=4cm]{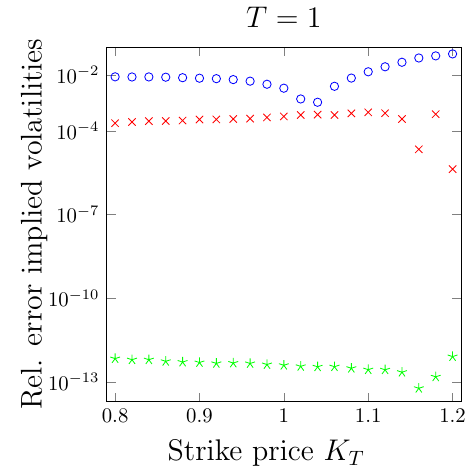}
 \caption{Relative error between volatilities of signature model associated with \eqref{bergomi_model} and reduced systems with $\widetilde n=5, 11, 27$, strike prices $K_T=(0.8+j\cdot 0.02)^{\sqrt{T}}$ and $T=1/12, 1/2, 1$.}\label{ImVol_12month_rel_error_r5_r11_r27}
 \end{figure}
We repeat this procedure in the reduced setting and obtain approximating IV. The relative error between the IV in the full and the reduced dynamics can be found in Figure \ref{ImVol_12month_rel_error_r5_r11_r27} with reduced dimensions $\widetilde n=5, 11, 27$ and terminal times $T=1/12, 1/2, 1$. We can see that the error is around one percent for $T=1/12, 1/2$ if we set $\widetilde n=5$. However, the same reduced dimension shows errors of up to six percent for $T=1$, since relatively small IV come with a higher relative approximation error. Therefore, it can be reasonable to enlarge the reduced dimension to $\widetilde n=11$. This yields relative deviations in the IV of around $10^{-4}$ only and is hence a very good approximation regardless of the maturity $T$. As expected, we obtain IV errors that can be totally neglected when fixing $\widetilde n=27$, see Figure \ref{ImVol_12month_rel_error_r5_r11_r27} once more. This is a very significant reduction in comparison to the original dimension $n=1365$.

 \subsection{Rough Bergomi model}

 Our second example is the \emph{rough Bergomi model} introduced in \cite{bayer2016pricing}, a work-horse model within the class of \emph{rough volatility models}, see \cite{bayer2023rough} for an exposition.
 Intuitively, the rough Bergomi model is a variant of the Bergomi model, where the exponential memory kernel in the variance process is replaced by a fractional kernel, in essence leading to the variance process being an exponential of a fractional Brownian motion.
 While seemingly innocuous, this change destroys the Markovian structure of the resulting model.
 More precisely, we consider the model
\begin{subequations}\label{rough_bergomi_model}
\begin{gather}
    \dd S_t = \sqrt{v_t} S_t \dd Z_t,\\
    v_t = \xi_0(t) \exp\left( \eta  \widehat{W}^H_t - \half \eta^2 t^{2H} \right),
\end{gather}
\end{subequations}
where $ \widehat{W}^H_t \coloneqq \sqrt{2H} \int_0^t (t-s)^{H-1/2} \dd W_s$ denotes a \emph{Riemann--Liouville fractional Brownian motion}, and $W$ and $Z$ are standard Brownian motions with correlation $\rho$.
Note that the rough Bergomi model is -- like the Bergomi model -- a forward variance model, (depending on the initial forward variance curve $\xi_0$).

\begin{table}[!htpb]
    \centering
    \begin{tabular}{c|c|c|c|c}
        $H$ & $\eta$ & $\rho$ & $S_0$ & $\xi_0(\cdot)$ \\
        \hline
        $0.3$ & $2.3$ & $-0.9$ & $1$ & $0.04$ \\
    \end{tabular}
    \caption{Parameters for the rough Bergomi model used in our numerical example.}
    \label{tab:rough-bergomi-parameters}
\end{table}

The term ``rough'' in the \emph{rough Bergomi model} reflects the empirical observation that the Hurst index $H$ should be chosen less than $\half$ -- leading to a power law explosion of the ATM implied volatility skew.
In fact, empirical studies show that $H$ is often chosen very close to $0$, e.g., $H = 0.07$ reported in \cite{bayer2016pricing} based on a calibration on SPX option prices as of February 4, 2010.

We again fit a signature model with underlying state process $X_t = (t, Z_t, W_t)$ (i.e. $d=3$) to a rough Bergomi model, taking model parameters reported in Table~\ref{tab:rough-bergomi-parameters}\christian{, using the same approach as in Section~\ref{sec:bergomi-model} for determining the signature model.}

\blue{
  \begin{remark}
    \label{rem:Lp-approx-signature}
    Note that fitting a signature model to the price $S_T$ in a rough Bergomi model is supported by a corresponding universal approximation. Indeed, as shown in \cite{bayer2023primal} (using \emph{robust signatures}) and \cite{ceylan2025global} (using classical signatures), any $L^p$ r.v.~measurable w.r.t.~a Brownian motion $(Z,W)$ can be approximated by linear functionals of the signature of the time-extended Brownian motion in $L^p$-sense.
  \end{remark}
}

The parameters are realistic, with the possible exception of our choice of $H=0.3$. 
While clearly within the range of Hurst parameters observed in the large scale study \cite{bennedsen2022decoupling}, it is comparatively large for rough volatility models. 
In our experience, fitting a signature model to a rough volatility model with very small $H$ is, however, hard, especially while keeping the truncation degree manageable.
Therefore, we choose $H=0.3$ as a compromise of a clearly fractional, non-Markovian model which is still easily fittable by a signature model.
Note that we again use a constant initial forward variance curve. In particular, we choose $m=7$ as the level of the truncated signature $\sigX^{\le m}_{0,t}$ leading to an equation in \eqref{ito_sig} with state dimension $n=3280$. The reduction technique of Section \ref{sec:dimension-reduction} provides a reduced system \eqref{reduced_sig} with state dimension $\widetilde n \ll n$ with the aim of having an accurate approximation of the quantity of interest, i.e.,  $\widetilde {\mathcal Y}_t \approx {\mathcal Y}_t$. The algebraic values $\sigma_k:=\sqrt{\eig_k(PQ)}$ in Figure \ref{plot_HSV_rough} tell us about the significance of state variables and hence the right reduced dimension $\widetilde n$. We observe that $\sigma_k$ is numerically zero for $k>55$. Therefore, a reduced system of order $\widetilde n=55$ is an exact model. However, choosing $\widetilde n$ much smaller than this can go along with a little approximation error as well. We illustrate the $L^2$-performance for all case with a true error in Figure \ref{plot_L2error_rough}. \martin{Once more, the $L^2$-error is computed from \eqref{error_rep} and not from sampling the underlying processes.} We, e.g., observe that the $L^2$-error between $\widetilde {\mathcal Y}$ with $\widetilde n>10$ and ${\mathcal Y}$ (linear functional of the signature) is below $0.01$.

\begin{figure}[ht]
 \begin{minipage}{0.45\linewidth}
  \hspace{-0.5cm}
 \includegraphics[width=1.0\textwidth,height=5cm]{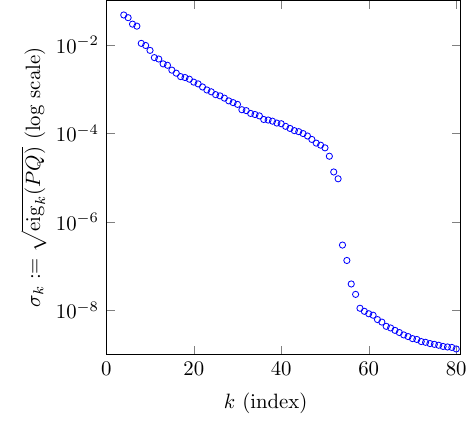}
 \caption{Square root of first $80$ out of $n=3280$ eigenvalues of $PQ$ for signature model associated with \eqref{rough_bergomi_model}.}\label{plot_HSV_rough}
 \end{minipage}\hspace{0.5cm}
 \begin{minipage}{0.45\linewidth}
  \hspace{-0.5cm}
 \includegraphics[width=1.0\textwidth,height=5cm]{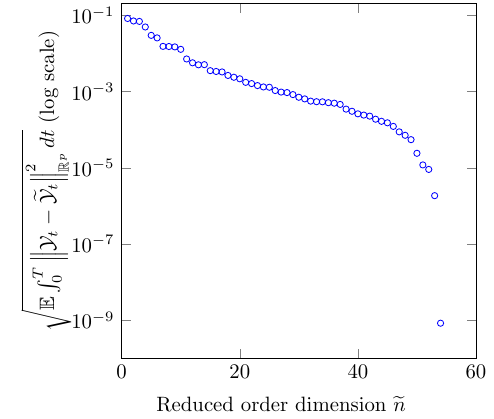}
 \caption{$L^2$-error between output of the signature model of \eqref{rough_bergomi_model} ($n=3280$) and reduced system output for $\widetilde n=1, \dots, 54$.}\label{plot_L2error_rough}
 \end{minipage}
 \end{figure}
 Moreover, we are interested in the quality of the signature approximation \eqref{reduced_sig} when IV are aimed to be reproduced. First of all, let us note the true IV of the signature model in Figure \ref{imVol_r27_differentT_rough} for $T=1/12, 1/2, 1$.
 \begin{figure}[ht]
  \hspace{-0.5cm}
 \includegraphics[width=0.33\textwidth,height=4cm]{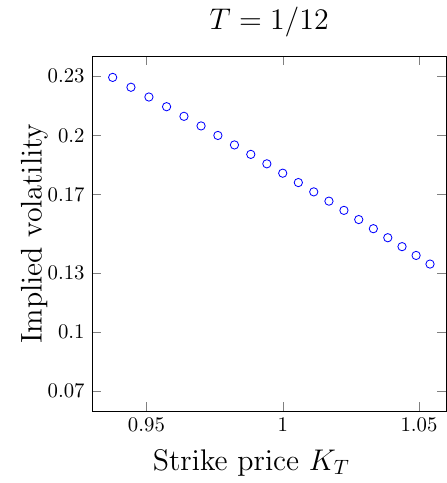}
 \includegraphics[width=0.33\textwidth,height=4cm]{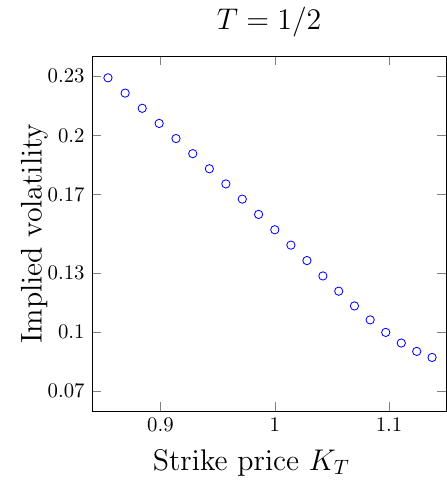}
 \includegraphics[width=0.33\textwidth,height=4cm]{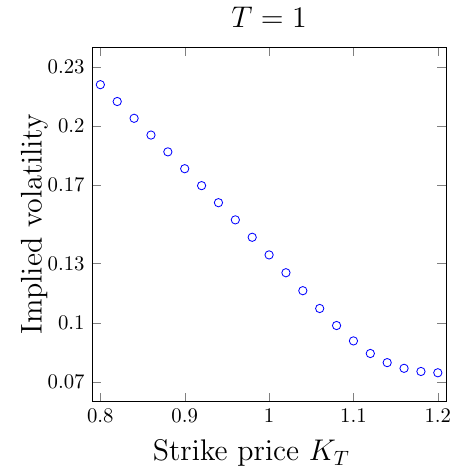}
 \caption{Implied volatilities \martin{of the signature-based approximation $S^{(\ell)}$} of \eqref{rough_bergomi_model} for  $T=1/12, 1/2, 1$ and strike prices $K_T=(0.8+j\cdot 0.02)^{\sqrt{T}}$ with $j=0, 1, \dots, 20$.}\label{imVol_r27_differentT_rough}
 \end{figure}
We approximated these values by the IV of reduced system for $\widetilde n=55$ and, as expected, we obtain an error that can be fully neglected, see Figure \ref{ImVol_12month_rel_error_r15_r55}. We demonstrate the case of $\widetilde n=15$ in the same figure to illustrate that even a reduction to such a small dimension (in comparison to $n=3280$), a relative error of less than $0.01$ can be guaranteed (often around $10^{-3}$).
\begin{figure}[ht]
  \centering
\begin{tikzpicture}
	\begin{customlegend}[legend columns=2, legend style={/tikz/every even column/.append style={column sep=1.0cm}} , legend entries={\,Reduced dimension $\widetilde n=15$ , \,Reduced dimension $\widetilde n=55$}, ]
	     \addlegendimage{red,line width = 1pt,mark options={scale=1.75}, only marks, mark = x} \addlegendimage{green,line width = 1pt,mark options={scale=1.75}, only marks, mark = star}
	\end{customlegend}
	\end{tikzpicture}
 \includegraphics[width=0.326\textwidth,height=4cm]{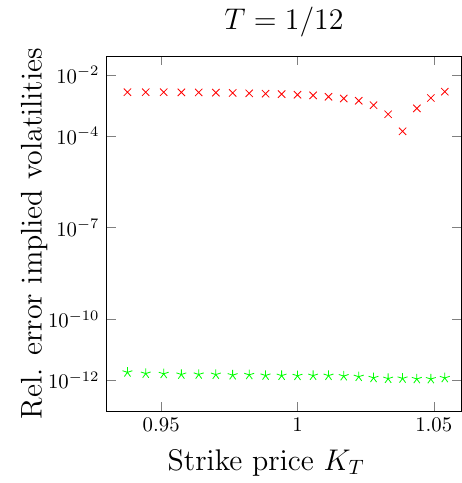}
 \includegraphics[width=0.326\textwidth,height=4cm]{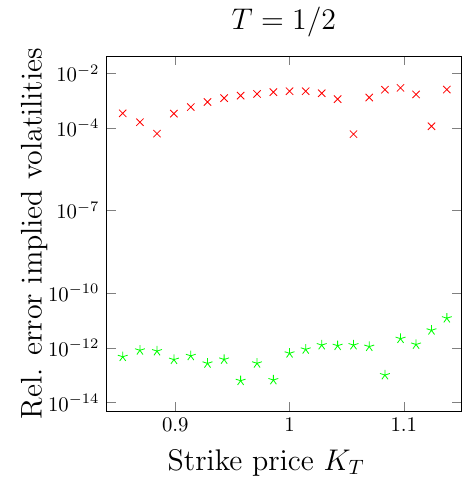}
 \includegraphics[width=0.326\textwidth,height=4cm]{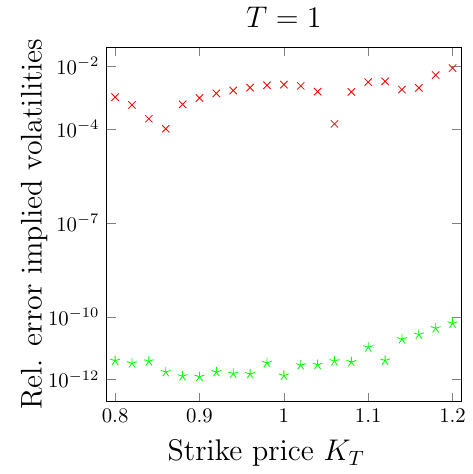}
 \caption{Relative error between volatilities of signature model associated with \eqref{rough_bergomi_model} and reduced systems with $\widetilde n=15, 55$, strike prices $K_T=(0.8+j\cdot 0.02)^{\sqrt{T}}$ and $T=1/12, 1/2, 1$.}\label{ImVol_12month_rel_error_r15_r55}

 \end{figure}


\section*{Acknowledgments}
CB acknowledges support from DFG CRC/TRR 388 ``Rough Analysis, Stochastic Dynamics and Related Fields'', Projects B01 and B03.

 MR is supported by the DFG via the individual grant ``Low-order approximations for large-scale problems arising in the context of high-dimensional PDEs and spatially discretized SPDEs'' -- project number 499366908 and via the SFB 1294, project A07 ``Data-based model order reduction for stochastic dynamics'' -- project number 318763901.

\appendix

\section{Introduction to signatures}
\label{sec:intr-sign}

\christian{For convenience, this section offers a self-contained, conceptual summary of path signatures to support the main results. For more details on the underlying concepts of rough paths, we refer to \cite{friz2020course} for a modern introduction as well as \cite{friz2010multidimensional} for a classical monograph. More information on path signature methods specifically can be found in \cite{sig_methods_finance}.}

\subsection{Signatures of smooth paths}
\label{sec:sig-smooth-path}


Consider a smooth (i.e., $\mathcal C^1$) path\footnote{In fact, all results of this subsection remain valid for continuous bounded variation paths.} $x: [0,T] \to V$ taking values in a Banach space $V$ -- for our purposes, we assume $d \coloneqq \dim V < \infty$.
Given a multi-index $I = (i_1, \ldots, i_n) \in \set{1, \ldots, d}^n$, $n \ge 1$, we denote the iterated integrals
\begin{equation}
  \label{eq:iterated-integrals}
  \sigx^I_{s,t} \coloneqq \int_{s < t_1 < \cdots < t_n < t}  \dot{x}^{i_1}_{t_1} \dd t_1 \cdots  \dot{x}^{i_n}_{t_n} \dd t_n \eqqcolon \int_{s < t_1 < \cdots < t_n < t} \dd x^{i_1}_{t_1} \cdots  \dd x^{i_n}_{t_n} \in \R,
\end{equation}
where $0 \le s \le t \le T$.
Here, $x^i_t$ denotes the $i$th coordinate of the path $x$ evaluated at time $t$ for a fixed basis $e_1, \ldots, e_d$ of $V$. We prefer to use the notation ``$\dd x_t$'' rather than ``$\dot{x}_t \dd t$'' because the former corresponds to the standard notation in the non-smooth case to be considered later -- think of paths of Brownian motion.

Note that the collection of all iterated integrals $\sigx^I_{s,t}$ for $I \in \set{1, \ldots, d}^n$ with fixed $n \ge 1$ takes values in $V^{\otimes n}$ recursively defined by $V^{\otimes 0} \coloneqq \R$, and $V^{\otimes (n+1)} \coloneqq V^{\otimes n} \otimes V$.
Specifically, we write
\begin{equation*}
  \left( \sigx^I_{s,t} \right)_{I \in \set{1, \ldots, d}^n} \eqqcolon \int_{s < t_1 < \cdots < t_n < t} \dd x_{t_1} \otimes \cdots \otimes \dd x_{t_n} \in V^{\otimes n}.
\end{equation*}
The \emph{signature} is the collection of all such iterated integrals, formally 
\begin{equation}
  \label{eq:signature-bv}
  \sigx^{< \infty}_{s,t} \coloneqq 1 + \sum_{n=1}^\infty \int_{s < t_1 < \cdots < t_n < t} \dd x_{t_1} \otimes \cdots \otimes \dd x_{t_n} \in T((V)),
\end{equation}
with the \emph{extended tensor algebra} $T((V))$ being defined as
\begin{equation}
  \label{eq:extended-tensor-algebra}
  T((V)) \coloneqq \prod_{n=0}^\infty V^{\otimes n}.
\end{equation}
Here, the initial term ``$1$'' is considered as the (sole) entry in $V^{\otimes 0} \simeq \R$.
Note that elements of the extended tensor algebra $T((V))$ have infinitely many terms, and are, hence, comparable to formal power series -- in $d$ non-commuting variables $e_1, \ldots, e_d$.
For numerical purposes, signatures need to be truncated at a finite degree. Hence, we also define 
\begin{equation}
  \label{eq:truncated-signature-bv}
  \sigx^{\le m}_{s,t} \coloneqq 1 + \sum_{n=1}^m \int_{s < t_1 < \cdots < t_n < t} \dd x_{t_1} \otimes \cdots \otimes \dd x_{t_n} \in T^m(V) \coloneqq \bigoplus_{n=0}^m V^{\otimes n}.
\end{equation}
Following our analogy from above, an element of the truncated tensor algebra $T^m(V)$ can be compared with a polynomial of degree $m$ -- note, however, that the tensor product is not commutative.

Both $T((V))$ and $T^m(V)$ are algebras under the tensor product $\otimes$ -- the product of formal power series in non-commuting variables. We refer to \cite{bayer2021optimal} for more information.
Using this product, we see that the signature (formally) solves a controlled ODE, namely $\dot{\sigx}_{0,t}^{<\infty} = \sigx_{0,t}^{<\infty} \otimes \dot{x}_t$ or (in a notation easier to adapt for non-smooth paths)
\begin{equation*}
  \dd \sigx_{0,t}^{<\infty} = \sigx_{0,t}^{<\infty} \otimes \dd x_t = \sum_{i=1}^d \sigx_{0,t}^{<\infty} \otimes e_i \dd x^i_t, \quad \sigx_{0,0}^{<\infty} = 1 \in T((V)),
\end{equation*}
where $e_i$, $i=1, \ldots d$, denote the standard basis vectors of $V$ as lifted to elements of $T((V))$, see also~\eqref{eq:signature-ODE}.
The truncated signature satisfies the same ODE, but interpreted on the truncated tensor algebra $T^m(V)$, i.e., with a nilpotent tensor product, which will -- abusing notation -- still be denoted $\otimes$.
That is, the truncated signature satisfies
\begin{equation*}
  \dd \sigx_{0,t}^{\le m} = \sigx_{0,t}^{\le m} \otimes \dd x_t = \sum_{i=1}^d \sigx_{0,t}^{\le m} \otimes e_i \dd x^i_t, \quad \sigx_{0,0}^{\le m} = 1 \in T^m(V).
\end{equation*}

\begin{remark}
  \label{rem:topology-T((V))}
  We do not endow $T((V))$ with a topology, and, hence, the ODE \eqref{eq:signature-ODE} is only defined in a formal way.
  Several topologies for $T((V))$ have been considered in the rough path literature, including natural Hilbert or Banach sub-spaces of $T((V))$, see \cite{chevyrev2018signature} for more details.
  In this paper, we will only ever use the full signature defined on $T((V))$ for motivation, the mathematical analysis will take place on $T^m(V)$ for finite truncation $m$.
  As a finite dimensional vector space, $T^m(V)$ will be equipped with the standard Euclidean metric.
\end{remark}

The signature is invariant under re-parameterization of the path: I.e., if we consider a (smooth, increasing) map, say, $\gamma : [R,S] \to [0,T]$, and a path $\bar{x}: [R,S] \to V$, $u \mapsto x_{\gamma(u)}$, then $\sigx_{\gamma(s),\gamma(t)}^{<\infty} = \bar{\sigx}_{s,t}^{<\infty}$, $R \le s \le t \le S$.
Up to re-parameterization and so-called tree-like excursions, the signature $\sigx^{< \infty}_{0,T}$ uniquely determines the path $x$, see \cite{hambly2010uniqueness}.
Invariance under parameterization as well as the possibility of tree-like excursions can be avoided by adding the component $t$ to the path.
We will often prefer to work with time-extended paths -- i.e., paths with a component $x^1_t \equiv t$.

We will also need to consider the dual algebra $\mathcal{W}_d \simeq T(V^\ast) \coloneqq \bigoplus_{n=0}^\infty (V^\ast)^{\otimes n}$ of linear functionals on $T((V))$.
Here, $\mathcal{W}_d$ denotes the linear span of all words $\mathbf{w} = i_1 \cdots i_k$, $k \ge 0$, in the alphabet $\set{1, \ldots, d}$, which is an algebra under the concatenation product on words, extended with the distributive property.
Consider a generic linear functional $\ell \in \mathcal{W}_d$ applied to a generic element $\mathfrak{a} \in T((V))$.
We can represent $\ell$ as a linear combination $\ell = \sum_{i=1}^k \gamma_i \mathbf{w}_i$ of words $\mathbf{w}_j = i^j_1 \cdots i^j_{K_j}$ of length $K_j$, $j=1, \ldots, k$, for some $k \ge 1$, and
\begin{equation*}
  \mathfrak{a} = \sum_{n=0}^\infty \sum_{I = (i_1, \ldots, i_n)\in \set{1, \ldots, d}^n} \alpha_I e_{i_1} \otimes \cdots \otimes e_{i_n}  \in T((V)),
\end{equation*}
owing to the fact that words $i^j_1 \cdots i^j_{K_j}$ form a basis of $\mathcal{W}_d$ and tensor products $e_{i_1} \otimes \cdots \otimes e_{i_n}$ form a basis of any truncated tensor algebra $T^m(V)$.\footnote{The infinite sum defining $\mathfrak{a}$ needs to be understood as a formal sum.}
We set
\begin{equation}\label{eq:linear-functional}
  \ip{\ell}{\mathfrak{a}} \coloneqq \sum_{j=1}^k \gamma_j \ip{\mathbf{w}_j}{\mathfrak{a}} \coloneqq  \sum_{j=1}^k \gamma_j \alpha_{(i^j_1, \ldots, i^j_{K_j})} \in \R.
\end{equation}

\subsection{Signatures for Hölder continuous paths}
\label{sec:sig-holder-paths}

Let $x \colon [0,T] \to V$ be an $\alpha$-Hölder continuous path, $\alpha \in (0,1]$. This means that the $\alpha$-Hölder seminorm
\begin{align*}
    \sup_{s < t} \frac{\|x_t - x_s\|}{|t-s|^{\alpha}}
\end{align*}
is finite. Then, we write $x \in C^{\alpha}$. We briefly sketch the concepts of rough paths and (truncated) signatures for $x \in C^{\alpha}$ -- see \cite{friz2020course} for more details. Formally, a rough path $\mathbf{x}$ is a two-parameter function from the simplex $\set{0 \le s \le t \le T}$ taking values in the truncated tensor algebra $T^m(V)$. 

\begin{remark}
  \label{rem:rough-path-heuristic}
  The different ``levels'' of a rough path increment $\mathbf{x}_{s,t}$, $s \le t$, have different interpretations. The $V$-valued component is the increment $x_{s,t} = x_t - x_s$ of the underlying $C^\alpha$ path $x$ itself. However, Hölder paths are not regular enough to allow us to solve controlled differential equations of the form 
  \[
    \dd y_t = U(y_t) \dd x_t.
  \]
  However, a formal Taylor expansion shows that higher order Euler approximations of the differential equation in terms of the iterated integral of $x$ of order up to $m = \floor{1/\alpha}$ \emph{would converge}. Of course, the catch is that -- once again -- $C^\alpha$ paths are not regular enough for the iterated integrals to make sense in a classical way when $\alpha \le 1/2$. On the flip side, if we enhance the path increments $x_{s,t}$ with higher order terms $\sigx^{I}_{s,t}$ ``behaving like iterated integrals'' (see \cite{friz2020course} for details), then we can solve the corresponding controlled differential equations for smooth enough vector fields in a pathwise, deterministic way.

  It is well-known that such \emph{rough path lifts} of $C^\alpha$ paths $x$ are \emph{always} possible, but \emph{not} unique when $\alpha \le 1/2$ (think about It\^{o} versus Stratonovich solutions of stochastic differential equations -- corresponding to two different rough path lifts of Brownian motion).
\end{remark}

We introduce a metric for two parameter functions taking values in $T^m(V)$ by 
\begin{align*}
    \varrho_{\alpha}(\mathbf{x}, \widetilde{\mathbf{x}}) := \sum_{n = 1}^m \sup_{0 \le s < t \le T} \frac{\|\sigx^{(n)}_{s,t} - \tilde{\sigx}^{(n)}_{s,t} \|}{|t-s|^{n \alpha}},
\end{align*}
where $\mathbf{x}_{s,t} \coloneqq 1 + \sum_{n=1}^m \sigx^{(n)}_{s,t} \in T^m(V)$ and $\tilde{\mathbf{x}}$ is defined accordingly -- with $\sigx^{(1)}_{s,t} = x_{s,t} \in V$ and $\sigx^{(n)}_{s,t} \in V^{\otimes n}$, $n=2, \ldots, m$, \christian{see \cite[Def.~2.3]{friz2020course} for the case $m=2$ and \cite[Def.~8.6]{friz2010multidimensional} for the general case. It is at the heart of \emph{rough path theory}, and allows the extension of the theory of signatures -- including universal approximation -- from the bounded variation case to the semimartingale case by providing a path-wise integration theory as well as path-wise well--posedness of stochastic differential equations, we refer to the two references above for details.} Now, we can define the desired concepts.
\begin{definition}
  \label{def:alpha-Holder-RP}
  Let $x \in C^{\alpha}$ and choose $N \coloneqq \floor{1/\alpha}$. A two-parameter function 
  \begin{align*}
      \mathbf{x}_{s,t} = 1+\sum_{n=1}^N \sigx^{(n)}_{s,t}\in T^N(V)
  \end{align*}
  with $\sigx^{(1)}_{s,t} = x_{s,t}$ is called a \emph{(geometric $\alpha$-Hölder) rough path associated to $x$} if there exists a sequence of smooth paths $x^{\epsilon}$ with truncated signature  $(\sigx^\epsilon)^{\le N}_{s,t}$, such that 
  \begin{align*}
      \varrho_{\alpha}(\mathbf x, (\sigx^\epsilon)^{\le N}) \to 0, \quad \text{as }\epsilon \to 0.
  \end{align*}
  We denote the set of all geometric $\alpha$-Hölder rough paths by $\mathscr{C}^\alpha_g([0,T];V)$. We also introduce $\widehat{\mathscr{C}}^\alpha_g([0,T]; V) \coloneqq \Set{ \mathbf{x} \in \mathscr{C}^\alpha_g([0,T];V) | \forall t \in [0,T]:\, x^1_{0,t} = t}$, where $x_{s,t}$ denotes the level-$1$ component of $\mathbf{x}_{s,t}$.
\end{definition}
For general $m$, the (truncated) signature $\sigx^{\le m}$ and $\sigx^{<\infty}$ of a geometric $\alpha$-Hölder rough path $\mathbf x$ can be defined as the limit of $(\sigx^\epsilon)^{\le m}$ and $(\sigx^\epsilon)^{<\infty}$, respectively, as $\epsilon \to 0$. \martin{Below, $L(V, W)$ denotes the set of all bounded linear operators from $V$ to the Banach space $W$.}
\begin{definition}
  Suppose that $W$ is another (finite-dimensional) Banach space, $\mathbf x\in\mathscr{C}^\alpha_g([0,T];V)$ and $U:W\to L(V, W)$. An $\alpha$-Hölder path $y:[0,T]\to W$ is called a \emph{solution of the rough differential equation}
  \begin{align}\label{rde}
      \dd y(t) = U(y(t)) \, \dd \mathbf{x}_t, \quad y(0) = y_0\in W,
  \end{align}
  if $y(0) = y_0$ and for a sequence $(x^\epsilon)$ of smooth paths with $\varrho_{\alpha}(\mathbf x, (\sigx^\epsilon)^{\le N}) \to 0$ (as $\epsilon\to 0$), the solutions $y^{\epsilon}$ of
  \begin{align*}
      \dd y^{\epsilon}(t) = U(y^{\epsilon}(t)) \, \dd {x}^{\epsilon}_t, \quad y^{\epsilon}(0) = y_0,
  \end{align*}
  exist and converge in the $\alpha$-Hölder metric to $y$.
\end{definition}
By construction of a solution of a rough differential equation, the truncated signature solves the equation
\begin{equation}
  \label{eq:signature-RDE-truncated}
  \dd \sigx_{0,t}^{\le m} = \sigx_{0,t}^{\le m} \otimes \dd \mathbf x_t = \sum_{i=1}^d \sigx_{0,t}^{\le m} \otimes e_i \dd \mathbf x^i_t, \quad \sigx_{0,0}^{\le m} = 1 \in T^m(V).
\end{equation}
Next, we formulate a result on existence and uniqueness of a solution of \eqref{rde} as well as on properties of the solution map.
\begin{theorem}\label{thm_ex_un_con}
 Given $\mathbf x\in \mathscr C^\alpha([0,T],V)$ for $\alpha\in\left(0, \frac{1}{2}\right]$ and $U\in \mathcal C_b^{N+1}(W, L(V, W))$ or linear, where $N \coloneqq \floor{1/\alpha}$. Then,
there is a unique solution $y\in C^\alpha([0,T], W)$ for \eqref{rde}. Moreover, the solution map $\mathbf x\mapsto g(\mathbf x)=y$ of \eqref{rde} is locally Lipschitz continuous.
\end{theorem}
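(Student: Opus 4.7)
The plan is to cite/reprove this as a standard result in rough path theory (essentially Theorem 8.4 and the continuity theorem in \cite{friz2020course}), but to adapt to the two cases ($C_b^{N+1}$ vector fields and linear vector fields). The unifying strategy is a Picard iteration in the space of controlled rough paths, combined with the sewing lemma to give meaning to the rough integral $\int U(y_s)\,\dd\mathbf{x}_s$.

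First I would fix $\mathbf{x}\in\mathscr{C}^\alpha_g([0,T];V)$ and introduce, following \cite{friz2020course}, the space $\mathscr{D}^{2\alpha}_{\mathbf{x}}([0,T];W)$ of $W$-valued paths controlled by $x$, i.e.\ pairs $(y,y')$ with $y\in C^\alpha$, $y'\in C^\alpha([0,T];L(V,W))$, and the remainder $R^y_{s,t}\coloneqq y_{s,t}-y'_s x_{s,t}$ satisfying $|R^y_{s,t}|\lesssim|t-s|^{2\alpha}$. For $\alpha\le 1/2$ one actually needs to work on the full level-$N$ truncation, using the notion of branched/geometric controlled path of order $N=\lfloor 1/\alpha\rfloor$. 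Given such a controlled path, the sewing lemma provides the rough integral $\int_0^t U(y_s)\,\dd\mathbf{x}_s$, and this integral again defines an element of $\mathscr{D}^{2\alpha}_{\mathbf{x}}$ whose Gubinelli derivative is $U(y)$.

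The core step is to show that the map $\mathcal{M}_T:(y,y')\mapsto\bigl(y_0+\int_0^\cdot U(y_s)\,\dd\mathbf{x}_s,\,U(y)\bigr)$ is a contraction on a ball in $\mathscr{D}^{2\alpha}_{\mathbf{x}}([0,T_0];W)$ for $T_0$ sufficiently small. The required estimates use $U\in\mathcal{C}_b^{N+1}$ to control Taylor remainders of $U$ against the controlled path remainder and are standard. Contraction on a small interval gives local existence and uniqueness; concatenating solutions extends them to $[0,T]$. In the bounded coefficient case, the a priori bound on $y$ is automatic; in the linear case $U(y)\cdot v$ depends linearly on $y$, and a Gronwall-type estimate on the sewn integral (again Chapter 8 of \cite{friz2020course}) rules out blow-up and yields a global solution.

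Local Lipschitz continuity of the solution map $\mathbf{x}\mapsto g(\mathbf{x})$ follows from the same fixed-point setup applied simultaneously to two drivers $\mathbf{x},\tilde{\mathbf{x}}$: one compares the two Picard iterations and uses stability of the rough integral in the joint variables $(y,y',\mathbf{x})$, which gives an estimate of the form $\varrho_\alpha(y,\tilde y)\le C(R)\,\varrho_\alpha(\mathbf{x},\tilde{\mathbf{x}})$ on bounded sets $\{\varrho_\alpha(\mathbf{x},0),\varrho_\alpha(\tilde{\mathbf{x}},0)\le R\}$. The main technical obstacle is the linear case: boundedness fails, so one must carefully iterate the a priori bound on intervals of length determined by $\|\mathbf{x}\|_\alpha$ and aggregate via a Gronwall-type argument, which is where most of the work sits; the bounded case is considerably cleaner because Picard iterates stay automatically in a fixed ball.
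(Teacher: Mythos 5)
Your proposal is essentially the same as the paper's: the paper's entire ``proof'' is a citation to \cite{friz2020course} (for $\alpha>1/3$) and \cite{friz2010multidimensional} (for general $\alpha$), and what you sketch --- Picard iteration in the space of controlled rough paths with the sewing lemma, localization in time, concatenation, a Gronwall argument for the unbounded linear case, and joint stability of the rough integral for local Lipschitz continuity of $\mathbf{x}\mapsto y$ --- is precisely the argument of Chapter 8 of \cite{friz2020course}. The only point worth flagging is that for $\alpha\le 1/3$ the level-$2$ controlled-path framework you describe in detail does not literally apply and the higher-order (or Friz--Victoir $p$-variation) machinery is genuinely more involved; you acknowledge this in one sentence, which matches the level at which the paper itself handles that case (by citation to \cite{friz2010multidimensional}).
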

\begin{proof}
We refer to \cite{friz2020course} for the case $\alpha>1/3$. The more general ($p$-variation) framework can be found in \cite{friz2010multidimensional}.
\end{proof}
\begin{remark}
  \label{rem:RP-principle}
  As evidenced by the difference between It\^{o} and Stratonovich solutions to SDEs, it is, in general, not possible to find solutions to controlled differential equations driven by rough signals $x$ such that the solution map ($x \mapsto y$) is continuous in a pathwise sense. The rough path approach factorizes this map into two parts:
  \begin{enumerate}
  \item the lift of $x$ to a rough path $\mathbf{x}$ (discontinuous);
  \item the solution map $\mathbf{x} \mapsto y$ (locally Lipschitz continuous by Theorem~\ref{thm_ex_un_con}).
  \end{enumerate}
  This fact is sometimes called the ``rough path principle''.
\end{remark}

\subsection{Signatures of semimartingales}
\label{sec:sign-semi-mart}

If, instead of a single deterministic path $x$, we are given a continuous semimartingale $X:\Omega \times [0,T] \to V$, then the statements above remain true, mutatis mutandis, provided that iterated integrals are defined as \emph{Stratonovich integrals} rather than It\^o integrals.

We define the (truncated) signatures $\sigX^{<\infty}_{s,t}$, $\sigX^{\le m}_{s,t}$ as in \eqref{eq:signature-bv} and \eqref{eq:truncated-signature-bv}, but with the iterated integrals \eqref{eq:iterated-integrals} replaced by 
\begin{equation}
  \label{eq:iterated-integrals-stratonovich}
  \sigX^I_{s,t} \coloneqq \int_{s < t_1 < \cdots < t_n < t} \circ \dd X^{i_1}_{t_1} \otimes \cdots \otimes \circ \dd X^{i_n}_{t_n}.
\end{equation}
Indeed, we can see that the thus defined rough path lift $\mathbf{X}$ a.s.~takes values in $\mathscr{C}^\alpha_g([0,T]; V)$ for any $1/3 < \alpha < 1/2$ -- we again refer to \cite{friz2020course} for details. Moreover, the solution of a stochastic differential equation, say
\[
  \dd Y_t = U(Y_t) \circ \dd X_t, \quad Y_0 = y_0 \in W,
\]
coincides a.s.~with the (pathwise) solution of the rough differential equation
\[
  \dd Y_t = U(Y_t) \dd \mathbf{X}_t, \quad Y_0 = y_0 \in W,
\]
where $U: W \to L(V,W)$ is sufficiently smooth.

\begin{remark}
  \label{rem:quadratic-variation}
  From a modeling point, it may be advantageous to include the quadratic variation of $X$ in the construction of the signature, i.e., to consider the signature of $(t, X_t, \langle X \rangle_t)$.
  While it is included in our framework, we will not require such an extension.
\end{remark}


\begin{remark}
  \label{rem:jump-semimartingales}
  The setting can also incorporate semimartingales with jumps, see, for instance, \cite{cuchiero2022universal} for a precise statement and proof of universality of signatures in that case.
\end{remark}


\subsection{Universal approximation theorem}
\label{sec:universal_app_sig}

We will now formulate \emph{universality} of signatures, a well known result in the literature (see, for instance, \cite{KLPA20,bayer2023primal}).
Recall that $\widehat{\mathscr{C}}^\alpha_g([0,T];V)$ denotes the set of geometric $\alpha$-Hölder rough paths $\mathbf{x}$ such that the first component $x^1$ of the underlying path $x$ is equal to running time, see Definition~\ref{def:alpha-Holder-RP}.
As noted in the smooth case in Section~\ref{sec:sig-smooth-path}, the signature $\sigx^{<\infty}_{0,T}$ of a time-extended rough path $\mathbf{x} \in \widehat{\mathscr{C}}^\alpha_g([0,T];V)$ characterizes $\mathbf{x}$ -- and, hence, $x$ up to the initial value $x_0$.
This, together with the fact that linear functionals of the signature form an algebra -- also already mentioned for smooth paths in Section~\ref{sec:sig-smooth-path}, but equally true for rough paths -- allow us to apply the Stone-Weierstrass theorem.
\begin{theorem}\label{thr:universality}
  For any compact subset $\mathcal{K} \subset \widehat{\mathscr{C}}^\alpha_g([0,T];V)$, any continuous function $f: \widehat{\mathscr{C}}^\alpha_g([0,T];V) \to \R$, and any $\epsilon > 0$, we can find a linear functional $\ell \in \mathcal{W}_d$ such that
  \begin{equation*}
    \sup_{\mathbf x \in \mathcal{K}} \abs{f(\mathbf x) - \ip{\ell}{\sigx^{<\infty}_{0,T}}} < \epsilon.
  \end{equation*}
\end{theorem}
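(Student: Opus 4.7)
My plan is to apply the Stone--Weierstrass theorem to the space
\[
A \coloneqq \Set{\mathbf{x} \mapsto \ip{\ell}{\sigx^{<\infty}_{0,T}} | \ell \in \mathcal{W}_d}
\]
viewed as a subspace of the real continuous functions on the compact metric space $\mathcal{K}$. I must therefore check that (i) $A \subset C(\mathcal{K},\R)$, (ii) $A$ is a subalgebra of $C(\mathcal{K},\R)$ containing the constants, and (iii) $A$ separates the points of $\mathcal{K}$.

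For (i), note that for any fixed $\ell \in \mathcal{W}_d$ there is a maximal word length $m$ appearing in $\ell$, so that $\ip{\ell}{\sigx^{<\infty}_{0,T}}$ in fact depends only on the truncated signature $\sigx^{\le m}_{0,T}$. The truncated signature solves the linear rough differential equation \eqref{eq:signature-RDE-truncated}, and hence by Theorem~\ref{thm_ex_un_con} the map $\mathbf{x} \mapsto \sigx^{\le m}_{0,T}$ is locally Lipschitz -- in particular continuous -- with respect to $\varrho_\alpha$; composition with the (continuous) linear functional $\ip{\ell}{\cdot}$ yields a continuous real-valued function on $\mathcal{K}$.

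For (ii), $A$ is closed under pointwise multiplication by virtue of the classical \emph{shuffle-product identity}
\[
\ip{\mathbf{w}_1}{\sigx^{<\infty}_{0,T}} \cdot \ip{\mathbf{w}_2}{\sigx^{<\infty}_{0,T}} = \ip{\mathbf{w}_1 \operatorname{sh} \mathbf{w}_2}{\sigx^{<\infty}_{0,T}},
\]
where $\operatorname{sh}$ denotes the shuffle of words. This identity is a direct consequence of iterated integration by parts in the smooth case, and transfers to geometric rough paths via the approximation defining $\sigx^{<\infty}$ (cf.\ Definition~\ref{def:alpha-Holder-RP}). Extended by linearity to arbitrary $\ell_1, \ell_2 \in \mathcal{W}_d$ this shows $A$ is a subalgebra, and the constant function $1$ belongs to $A$ as the image of the empty word.

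The main obstacle is (iii). If $\mathbf{x}, \tilde{\mathbf{x}} \in \mathcal{K}$ with $\mathbf{x} \ne \tilde{\mathbf{x}}$, then their full signatures $\sigx^{<\infty}_{0,T}$ and $\tilde{\sigx}^{<\infty}_{0,T}$ must differ as elements of $T((V))$: this is the Hambly--Lyons uniqueness theorem, whose applicability in our setting hinges on the fact that the running-time component built into every $\mathbf{x} \in \widehat{\mathscr{C}}^\alpha_g([0,T];V)$ precludes any non-trivial tree-like excursion, as already noted in the paragraph preceding the theorem. Once the two signatures differ in $T((V))$, at least one coordinate $\ip{\mathbf{w}}{\cdot}$ separates them, and that word $\mathbf{w}$ supplies the desired separating $\ell \in \mathcal{W}_d$. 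With (i)--(iii) established, Stone--Weierstrass guarantees that $A$ is uniformly dense in $C(\mathcal{K},\R)$, which is exactly the claim.
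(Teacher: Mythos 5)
Your proof is correct and follows essentially the same route as the paper's: apply Stone--Weierstrass after checking that the linear functionals of the signature form a point-separating subalgebra of continuous functions containing the constants, with the algebra property supplied by the shuffle identity and point-separation by signature uniqueness of time-extended rough paths. Your argument for continuity is slightly more self-contained (deducing it from the linear RDE~\eqref{eq:signature-RDE-truncated} together with the locally Lipschitz solution map of Theorem~\ref{thm_ex_un_con}) where the paper merely cites the literature, but this is an elaboration rather than a different approach.
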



\begin{proof}[Proof of Theorem~\ref{thr:universality}.]
  For completeness, we give a short proof of this well-known result, a direct consequence of the Stone-Weierstrass theorem.
  Hence, we need to prove that
  \begin{equation*}
    \mathcal{A} \coloneqq \Set{\mathbf{x} \mapsto \ip{\ell}{\sigx^{<\infty}_{0,T}} | \ell \in \mathcal{W}_d, \ \mathbf{x} \in \widehat{\mathscr{C}}^\alpha_g([0,T];V)}
  \end{equation*}
  is a subalgebra of $C\left( \widehat{\mathscr{C}}^\alpha_g([0,T];V);\R \right)$ which is point-separating and contains a non-zero constant function.
  First note that for any $\ell \in \mathcal{W}_d$, the map $\mathbf{x} \mapsto \ip{\ell}{\sigx^{< \infty}_{0,T}}$ is continuous, see, for instance, \cite{friz2010multidimensional}.
  $\mathcal{A}$ is point-separating since the signature $\sigx^{<\infty}$ uniquely determines the rough path $\mathbf{x}$ by the discussion above.
  The constant function $\mathbf{x} \mapsto 1$ is obviously contained in $\mathcal{A}$ by simply choosing $\ell = \emptyset$, where $\emptyset$ denotes the empty word.

  This leaves us to prove that $\mathcal{A}$ is an algebra.
  In addition to the concatenation product, $\mathcal{W}_d$ is also equipped with a commutative shuffle product $\shuffle: \mathcal{W}_d \times \mathcal{W}_d \to \mathcal{W}_d$, see, e.g., \cite{bayer2021optimal}, and the following \emph{shuffle identity} holds for signatures:
  For $\ell_1, \ell_2 \in \mathcal{W}_d$ and a geometric rough path $\mathbf{x}$, we have
  \begin{equation}
    \label{eq:shuffle_identity}
    \ip{\ell_1}{\sigx_{s,t}^{<\infty}} \ip{\ell_2}{\sigx_{s,t}^{<\infty}} = \ip{\ell_1 \shuffle \ell_2}{\sigx_{s,t}^{<\infty}}.
  \end{equation}
  Hence, $\mathcal{A}$ is an algebra and the proof is complete.
\end{proof}
\begin{example}\label{ex_cont_func}
Let $g$ be the solution map of \eqref{rde}, i.e., $y=g(\mathbf x)$, mapping a rough path $\mathbf{x}$ to a path $y$. Then, we  know by Theorem \ref{thm_ex_un_con} that this map is continuous. For that reason, a potential continuous functional of interest in Theorem \ref{thr:universality} can be the $i$th component $y_i$ of $y$ evaluated at $T$ (e.g., think of $W=\mathbb R^d$) meaning that $f(\mathbf x)= y_i(T)$.
\end{example}

As noted in Section~\ref{sec:sign-semi-mart}, we can also lift semimartingales $X$ to the associated rough path $\mathbf{X}$ taking values in $\widehat{\mathscr{C}}^\alpha_g([0,T];V)$-- assuming $X^1_t \equiv t$ --, and Theorem~\ref{thr:universality} applies:
\begin{corollary}
  \label{cor:universality-semimartingales}
  Given a continuous functional $F: \widehat{\mathscr{C}}^\alpha_g([0,T];V) \to \R$ and a compact subset $\mathcal{K} \subset \widehat{\mathscr{C}}^\alpha_g([0,T];V)$. Then, for every $\epsilon > 0$, there is $\ell \in \mathcal{W}_d$, such that
  \[
    \sup_{\omega \in \mathbf{X}^{-1}(\mathcal{K})} \abs{F(\mathbf{X}(\omega)) - \ip{\ell}{\sigX^{<\infty}_{0,T}(\omega)}}<\epsilon,
  \]
  interpreting the rough path lift as a random variable $\mathbf{X}: \Omega \to \widehat{\mathscr{C}}^\alpha_g([0,T];V)$.
\end{corollary}
We also refer to \cite[Corollary 3.8]{cuchiero2022universal} for a more general version applicable to jump-semimartingales.


\subsection{Dynamic approximations with signatures}
\label{sec:dynamic-signature}

Theorem~\ref{thr:universality} shows that a \christian{continuous} function $f(x|_{[0,T]})$ of a path\footnote{For simplicity, we will consider a smooth path $x$ for the time being, but the discussion is equally valid for the rough case.} defined on $[0,T]$ can be approximated by linear functionals of the path's signature $\ip{\ell}{\mathbbm{x}^{<\infty}_{0,T}}$ on the interval $[0,T]$.
Suppose that we are instead given a path-valued functional, i.e., $y(t) = f(t, x|_{[0,t]})$ -- think of a stochastic process $y$ adapted to the filtration generated by $x$.
Under the required regularity conditions, Theorem~\ref{thr:universality} immediately implies that we can find approximations
\[
  y(t) = f(t, x|_{[0,t]}) \approx \ip{\ell(t)}{\mathbbm{x}^{<\infty}_{0,t}}
\]
for suitable linear functionals $\ell(t)$. 
Can we find \emph{uniform approximations} in the sense that
\[
  y(t) = f(t, x|_{[0,t]}) \approx \ip{\ell}{\mathbbm{x}^{<\infty}_{0,t}}
\]
with a linear functional $\ell$ independent of $t$?
On a computational side, this seems plausible, since $\mathbbm{x}^{<\infty}_{0,t}$ contains all monomials in $t$ when $x^1_t \equiv t$.

To formalise the problem, we introduce the space of \emph{stopped rough paths}, going back to \cite{dupire2019functional},  see also, \cite{KLPA20,bayer2021optimal}, for a more didactic presentation we also refer to \cite{bayer_hager_riedel_book24}.

Indeed, consider $\mathcal{C}_T \coloneqq \bigcup_{t \in [0,T]} C([0,t]; V)$ understood as a disjoint union.
Note that an element $x|_{[0,t]} \in \mathcal{C}_T$ is the restriction of a continuous path defined on the interval $[0,T]$ to a sub-interval $[0,t]$, $t\le T$.
A metric is defined on $\mathcal{C}_T$ setting
\[
  d_{\mathcal{C}_T}(x|_{[0,t]}, y|_{[0,s]}) \coloneqq \sup_{u \in [0,T]} \abs{ x_{t \wedge u} - y_{s \wedge u}} + \abs{t-s},
\]
and it turns out that $\mathcal{C}_T$ equipped with this metric is a Polish space.

We will use this framework to study dynamic functional approximations, i.e., we want so approximate time dependent functions of a path of the form $f(t, x|_{[0,t]})$, $0 \le t \le T$.
Note that we can easily understand such a function $f$ as a function $F: \mathcal{C}_T \to \R$.
The natural question is now whether such functions of restrictions of paths can also be approximated by linear functionals of the signature.

Of course, the space $\mathcal{C}_T$ is too large to allow for such approximations.
As before, we will restrict ourselves to a proper rough path version thereof.
Let, for fixed $\alpha$, denote
\begin{equation}
  \label{eq:stopped-rough-paths}
  \Lambda_T^\alpha \coloneqq \bigcup_{t \in [0,T]} \widehat{\mathscr{C}}^\alpha_g([0,t]; V).
\end{equation}
(We restrict ourselves to the time-extended case from the beginning because any such universal approximation result will anyway require us to add time as a component.) 
Using a similar definition (see \cite{bayer2023primal}), we can define a rough path metric $d$ on $\Lambda^\alpha_T$, under which it is a Polish space.
In this framework, repeating the arguments used in the proof of Theorem~\ref{thr:universality} gives (see \cite{bayer_hager_riedel_book24} for more details)
\begin{corollary}
  \label{cor:dynamic-sig-universality}
  Let $\mathcal{K} \subset \Lambda^\alpha_T$ be compact and $F \in C\left( \Lambda^\alpha_T; \R \right)$, then for every $\epsilon > 0$ there is an $\ell \in \mathcal{W}_d$ such that
  \[
    \sup_{\mathbf{x}|_{[0,t]} \in \mathcal{K}} \abs{F(\mathbf{x}|_{[0,t]}) - \ip{\ell}{\sigx^{<\infty}_{0,t}}} < \epsilon.
  \]
  The analogue result for lifts $\mathbf{X}$ of semimartingales also holds a.s.
\end{corollary}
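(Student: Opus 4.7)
The plan is to mimic the Stone–Weierstrass argument used in the proof of Theorem~\ref{thr:universality}, replacing the space $\widehat{\mathscr{C}}^\alpha_g([0,T];V)$ by the stopped rough path space $\Lambda^\alpha_T$ and the signature at a fixed time $T$ by the signature up to the (variable) cut‑off time $t$. Concretely, I would apply Stone–Weierstrass on the compact set $\mathcal{K}$ to the subalgebra
\[
  \mathcal{A} \coloneqq \Set{\mathbf{x}|_{[0,t]} \mapsto \ip{\ell}{\sigx^{<\infty}_{0,t}} | \ell \in \mathcal{W}_d} \subset C\bigl(\Lambda_T^\alpha;\mathbb{R}\bigr),
\]
and verify the three hypotheses: continuity, algebra structure, and point‑separation (plus the obvious fact that $\ell = \emptyset$ produces the constant $1$).

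For the algebra property, the shuffle identity \eqref{eq:shuffle_identity} is a pointwise identity at each time $t$, so it carries over verbatim: $\ip{\ell_1}{\sigx^{<\infty}_{0,t}} \ip{\ell_2}{\sigx^{<\infty}_{0,t}} = \ip{\ell_1 \shuffle \ell_2}{\sigx^{<\infty}_{0,t}}$ for every $\mathbf{x}|_{[0,t]}\in\Lambda^\alpha_T$. For point‑separation, I would split into two cases. If two stopped rough paths have \emph{different} end times $s \neq t$, then the single‑letter word $\mathbf{w} = 1$ (reading off the first coordinate of the level‑1 part) gives $\ip{\mathbf{w}}{\sigx^{<\infty}_{0,t}} = t$, since by definition of $\widehat{\mathscr{C}}^\alpha_g$ the first component is running time; thus this particular linear functional already distinguishes them. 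If $s = t$, the classical injectivity of the signature on time‑extended geometric rough paths (the uniqueness result quoted in Section~\ref{sec:sig-smooth-path}) yields a word on which the two signatures differ.

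The main obstacle, and the only step that is not pure algebra, is verifying the continuity of the map $\mathbf{x}|_{[0,t]} \mapsto \ip{\ell}{\sigx^{<\infty}_{0,t}}$ in the stopped rough path metric $d$ on $\Lambda^\alpha_T$. For a fixed horizon this is the standard local Lipschitz continuity of the signature with respect to $\varrho_\alpha$, as used in the proof of Theorem~\ref{thr:universality}. The new issue is that the end time itself varies. I would handle this by writing the difference $\ip{\ell}{\sigx^{<\infty}_{0,t}} - \ip{\ell}{\sighx^{<\infty}_{0,s}}$ as a telescoping sum: first compare $\sigx^{<\infty}_{0,t}$ with $\sighx^{<\infty}_{0,s\wedge t}$ (which uses $\varrho_\alpha$‑closeness of the two rough paths on the common sub‑interval), and then estimate the small remaining piece $\sighx^{<\infty}_{s \wedge t, s}$ using the Hölder bound $\|\sigx^{(n)}_{u,v}\| \lesssim |v-u|^{n\alpha}$, which is controlled by $|s-t|$ entering the definition of $d$. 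This combination yields the required continuity and completes the Stone–Weierstrass application; the semimartingale addendum then follows by applying the deterministic result $\omega$‑wise, exactly as in Corollary~\ref{cor:universality-semimartingales}.
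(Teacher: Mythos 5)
Your proof follows exactly the route the paper intends: the paper disposes of this corollary with the single line ``repeating the arguments used in the proof of Theorem~\ref{thr:universality} gives \ldots'' together with a citation, and your Stone--Weierstrass argument on $\Lambda^\alpha_T$ (shuffle identity for the algebra structure, the running-time coordinate plus signature injectivity for point separation, continuity of stopped-signature functionals handled by Chen's relation and the $|t-s|^{n\alpha}$ H\"older bound) is precisely the intended unwinding of that line. The only cosmetic quibble is in the continuity telescoping, where the ``remaining piece'' $\sighx^{<\infty}_{s\wedge t,s}$ should really be the increment of whichever of the two rough paths has the longer interval (i.e., $\sigx^{<\infty}_{s\wedge t,t}$ if $s<t$), but this does not affect the argument.
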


\begin{remark}
  \label{rem:Lp-universality}
  The above universal approximation theorems -- as typical in the area -- are formulated in terms of uniform convergence on compacts.
  We admit that this concept is problematic, especially in infinite dimensions, as compact sets are often very small.
  Note that when we consider stochastic processes, e.g., the lift $\mathbf{X}|_{[0,t]}$ of a semimartingale taking values in $\Lambda^\alpha_T$ (a Polish space), \emph{tightness} implies that for every $\delta > 0$ we can find a compact subset $\mathcal[K] \subset \Lambda^\alpha_T$ such that $P\left( \mathbf{X} \notin \mathcal{K} \right) < \delta$, which allows us to replace ``$\epsilon$-close on compact sets'' by ``$\epsilon$-close with probability $1-\delta$''.

  More general global universal approximation results can be formulated in terms of \emph{weighted spaces} (see \cite{cuchiero2023global}) or in terms of $L^p$-norms by using so-called ``robust signatures'' (see \cite{chevyrev2018signature} for the definition of robust signatures and \cite{bayer2023primal, SA23} for the universality in $L^p$.)
\end{remark}

\subsection{Example: Signature models in finance}
\label{sec:exampl-sign-models}


As motivated by the universality of the signature, we consider the problem of approximating a fixed linear functional $\ell$ of the signature $\sigx^{\le m}_{0,T}$ of a time-augmented smooth path, or, alternatively, a fixed linear functional $\ell$ of the signature $\sigX^{\le m}_{0,T}$ of a (time-extended) continuous semimartingale, by a linear functional of an alternative path $\widetilde {\sigx}$ or $\widetilde {\sigX}$ taking values in a space $\R^{\widetilde n}$ with $\widetilde n \ll \dim T^m(V)$.

As guiding example, we consider \emph{signature models} for financial markets as introduced in \cite{cuchiero2022signature}.
Consider a $d$-dimensional underlying base process $X$, which is assumed to be a (time-extended) continuous semimartingale.
Given a linear functional $\ell \in \mathcal{W}_d$, we then consider an asset price process given by
\begin{equation}
  \label{eq:signature-model}
  S_t = S^{(\ell)}_t \coloneqq \ip{\ell}{\sigX^{\le m}_{0,t}},
\end{equation}
where the truncation degree $m$ is chosen high enough that all words in $\ell$ of degree larger than $m$ have zero coefficient.
Note that conditions on $\ell$ can be formulated such that the resulting price process satisfies fundamental requirements of mathematical finance such as no-arbitrage.\footnote{As such conditions are usually easier to formulate w.r.t.~It\^{o} rather than Stratonovich form, those conditions may look simpler in terms of the associated It\^{o} signature.}
Universality of signatures implies that signature models form a very flexible class of models, capable of approximating many desirable properties of asset price models.
In addition, even though the price process~\eqref{eq:signature-model} is not a Markov process, efficient numerical routines for option pricing exist in some cases.\footnote{Essentially, when the payoff function can be efficiently approximated by a linear functional of the signature of $S$.}
As explained in \cite{cuchiero2022signature}, the model parameter $\ell$ can be efficiently estimated either from time series of asset prices, or from option price data.
However, the dimension of the ``state space'' $T^m(V)$ of~\eqref{eq:signature-model} is generally very large.
Indeed, the dimension (in this sense) of the numerical examples in \cite{cuchiero2022signature} are generally of order $100$, but more complex markets might easily lead to much higher dimensional approximations.

In this paper, we assume that we are given a calibrated signature model $S = S^{(\ell)}$ for a fixed linear functional $\ell$ of degree $m$.
The purpose is to derive reduced order models, i.e., a linear SDE with solution $\widetilde \sigX$ in $\R^{\widetilde n}$ and a linear functional $\widetilde{\ell}: \R^{\widetilde n} \to \R$ such that
\begin{align}\label{form_aim}
\ip{\widetilde{\ell}}{\widetilde \sigX} \approx S^{(\ell)}\quad \text{and} \quad \widetilde n \ll \dim T^m(V).
\end{align}
Note that the requirement of $\widetilde \sigX$ to solve a linear SDE is natural, given that already $\sigX^{\le m}$ solves a linear SDE, see \eqref{eq:signature-ODE-truncated}.


\printbibliography

@Article{bayer2021optimal,
  author    = {Bayer, C. and Hager, P. and Riedel, S. and Schoenmakers, J.},
  journal   = {The Annals of Applied Probability},
  title     = {Optimal stopping with signatures},
  year      = {2023},
  number    = {1},
  pages     = {238--273},
  volume    = {33},
  publisher = {Institute of Mathematical Statistics},
}

@Book{bergomi2015stochastic,
  author    = {Bergomi, L.},
  publisher = {CRC press},
  title     = {Stochastic volatility modeling},
  year      = {2015},
}

@Article{cuchiero2022signature,
  author    = {Cuchiero, C. and Gazzani, G. and Svaluto-Ferro, S.},
  journal   = {SIAM journal on financial mathematics},
  title     = {{Signature-based models: Theory and calibration}},
  year      = {2023},
  number    = {3},
  pages     = {910--957},
  volume    = {14},
  publisher = {SIAM},
}

@Article{chen1957integration,
  author    = {Chen, K.-T.},
  journal   = {Annals of Mathematics},
  title     = {Integration of paths, geometric invariants and a generalized {B}aker-{H}ausdorff formula},
  year      = {1957},
  pages     = {163--178},
  publisher = {JSTOR},
}

@Article{dupire2019functional,
  author    = {Dupire, B.},
  journal   = {Quantitative Finance},
  title     = {Functional {It\^o} calculus},
  year      = {2019},
  number    = {5},
  pages     = {721--729},
  volume    = {19},
  publisher = {Taylor \& Francis},
}

@Book{friz2010multidimensional,
  author    = {Friz, P. K. and Victoir, N. B.},
  publisher = {Cambridge University Press},
  title     = {Multidimensional stochastic processes as rough paths: theory and applications},
  year      = {2010},
  volume    = {120},
}

@Book{friz2020course,
  author    = {Friz, P. K. and Hairer, M.},
  publisher = {Springer},
  title     = {A course on rough paths},
  year      = {2020},
}

@Article{hambly2010uniqueness,
  author    = {Hambly, B. and Lyons, T.},
  journal   = {Annals of Mathematics},
  title     = {Uniqueness for the signature of a path of bounded variation and the reduced path group},
  year      = {2010},
  pages     = {109--167},
  publisher = {JSTOR},
}

@Article{cuchiero2022universal,
  author    = {Cuchiero, C. and Primavera, F. and Svaluto-Ferro, S.},
  journal   = {Finance and Stochastics},
  title     = {{Universal approximation theorems for continuous functions of c{\`a}dl{\`a}g paths and L{\'e}vy-type signature models}},
  year      = {2025},
  number    = {2},
  pages     = {289--342},
  volume    = {29},
  publisher = {Springer},
}

@InCollection{bayer_hager_riedel_book24,
  author    = {Bayer, C. and Hager, P. and Riedel, S.},
  booktitle = {Signature Methods in Finance: An Introduction with Computational Applications},
  publisher = {Springer},
  title     = {{Optimal stopping for non-Markovian asset price processes}},
  pages     = {299--331},
  year      = {2025},
}

@Article{guyon2022vix,
  author    = {Guyon, J.},
  journal   = {SIAM Journal on Financial Mathematics},
  title     = {The {VIX} future in {B}ergomi models: Fast approximation formulas and joint calibration with {S\&P} 500 skew},
  year      = {2022},
  number    = {4},
  pages     = {1418--1485},
  volume    = {13},
  publisher = {SIAM},
}

@Article{bayer2023primal,
  author    = {Bayer, C. and Pelizzari, L. and Schoenmakers, J.},
  journal   = {Finance and Stochastics},
  title     = {Primal and dual optimal stopping with signatures},
  year      = {2025},
  number    = {4},
  pages     = {981--1014},
  volume    = {29},
  publisher = {Springer},
}

@Article{cuchiero2023global,
  author    = {Cuchiero, C. and Schmocker, P. and Teichmann, J.},
  journal   = {Constructive Approximation},
  title     = {Global universal approximation of functional input maps on weighted spaces},
  year      = {2026},
  pages     = {1--76},
  publisher = {Springer},
}

@Article{chevyrev2018signature,
  author  = {Chevyrev, I. and Oberhauser, H.},
  journal = {Journal of Machine Learning Research},
  title   = {Signature moments to characterize laws of stochastic processes},
  year    = {2022},
  number  = {176},
  pages   = {1--42},
  volume  = {23},
}

@Article{redmannbenner,
  author    = {P. {Benner} and M. {Redmann}},
  journal   = {{Stoch PDE: Anal Comp}},
  title     = {{Model Reduction for Stochastic Systems.}},
  year      = {2015},
  issn      = {2194-0401; 2194-041X},
  number    = {3},
  pages     = {291--338},
  volume    = {3},
  doi       = {10.1016/j.laa.2014.12.005},
  fjournal  = {{Stochastic Partial Differential Equations: Analysis and Computations}},
  language  = {English},
  msc2010   = {78M34 60J75 60H15 60H10 34K28},
  publisher = {Springer Science+Business Media New York 2015Springer Science+Business Media New York 2015},
}

@Article{beckerhartmann,
  author  = {Becker, S. and Hartmann, C.},
  journal = {Math. Control Signals, Syst.},
  title   = {{Infinite-dimensional bilinear and stochastic balanced truncation with error bounds}},
  year    = {2019},
  pages   = {1--37},
  volume  = {31},
}

@Article{bennerdamm,
  author    = {Benner, P. and Damm, T.},
  journal   = {SIAM J. Control Optim.},
  title     = {{Lyapunov equations, energy functionals, and model order reduction of bilinear and stochastic systems.}},
  year      = {2011},
  number    = {2},
  pages     = {686--711},
  volume    = {49},
  abstract  = {{Summary: We discuss the relation of a certain type of generalized Lyapunov equations to Gramians of stochastic and bilinear systems together with the corresponding energy functionals. While Gramians and energy functionals of stochastic linear systems show a strong correspondence to the analogous objects for deterministic linear systems, the relation of Gramians and energy functionals for bilinear systems is less obvious. We discuss results from the literature for the latter problem and provide new characterizations of input and output energies of bilinear systems in terms of algebraic Gramians satisfying generalized Lyapunov equations. In any of the considered cases, the definition of algebraic Gramians allows us to compute balancing transformations and implies model reduction methods analogous to balanced truncation for linear deterministic systems. We illustrate the performance of these model reduction methods by showing numerical experiments for different bilinear systems.}},
  classmath = {{*93E03 (General theory of stochastic systems) 93A15 (Large scale systems) 93A30 (Mathematical modelling of systems) 93C10 (Nonlinear control systems) 93B40 (Computational methods in systems theory) 93B11 (System structure simplification) }},
  doi       = {10.1137/09075041X},
  keywords  = {{Lyapunov equations; Gramians; energy functionals; balanced truncation; model order reduction; bilinear systems; stochastic systems}},
  language  = {English},
}

@Article{moo1981,
  author    = {Moore, B.},
  journal   = {IEEE Trans. Autom. Control},
  title     = {Principal component analysis in linear systems: Controllability, observability, and model reduction},
  year      = {1981},
  number    = {1},
  pages     = {17--32},
  volume    = {26},
  publisher = {IEEE},
}

@Article{bennerdammcruz,
  author    = {Benner, P. and Damm, T. and Rodriguez Cruz, Y. R.},
  journal   = {IEEE Trans. Autom. Contr.},
  title     = {{Dual pairs of generalized Lyapunov inequalities and balanced truncation of stochastic linear systems}},
  year      = {2017},
  number    = {2},
  pages     = {782--791},
  volume    = {62},
  owner     = {martin},
  quality   = {1},
  timestamp = {2016.05.20},
}

@Article{mor_heston,
  author  = {Redmann, M. and Bayer, C. and Goyal, P.},
  journal = {SIAM J. Financial Math.},
  title   = {{Low-dimensional approximations of high-dimensional asset price models}},
  year    = {2021},
  number  = {1},
  pages   = {1--28},
  volume  = {12},
}

@Article{irka_finance,
  author  = {Redmann, M.},
  journal = {Applied Mathematical Finance},
  title   = {{Solving high-dimensional optimal stopping problems using optimization based model order reduction}},
  year    = {2022},
  number  = {2},
  pages   = {110--140},
  volume  = {29},
}

@Article{redmann_jamshidi,
  author  = {Redmann, M. and Jamshidi, N.},
  journal = {{ Mathematics of Control, Signals, and Systems}},
  title   = {{Gramian-based model reduction for unstable stochastic systems}},
  year    = {2022},
  pages   = {855--881},
  volume  = {34},
}

@Article{morGawJ90,
  author  = {W. Gawronski and J. Juang},
  journal = {Int. J. Syst. Sci.},
  title   = {Model reduction in limited time and frequency intervals},
  year    = {1990},
  number  = {2},
  pages   = {349--376},
  volume  = {21},
  doi     = {10.1080/00207729008910366},
}

@Article{lyons1998differential,
  author  = {Lyons, T.},
  journal = {Revista Matem{\'a}tica Iberoamericana},
  title   = {Differential equations driven by rough signals},
  year    = {1998},
  number  = {2},
  pages   = {215--310},
  volume  = {14},
}

@Article{lee2023signature,
  author    = {Lee, D. and Oberhauser, H.},
  title     = {The signature kernel},
  year      = {2025},
  pages     = {85--124},
  booktitle = {Signature Methods in Finance: An Introduction with Computational Applications},
  publisher = {Springer},
}

@Article{bayer2016pricing,
  author    = {Bayer, C. and Friz, P. K. and Gatheral, J.},
  journal   = {Quantitative Finance},
  title     = {Pricing under rough volatility},
  year      = {2016},
  number    = {6},
  pages     = {887--904},
  volume    = {16},
  publisher = {Taylor \& Francis},
}

@Book{bayer2023rough,
  author    = {Bayer, C. and Friz, P. K. and Fukasawa, M. and Gatheral, J. and Jacquier, A. and Rosenbaum, M.},
  publisher = {SIAM},
  title     = {Rough volatility},
  year      = {2023},
}

@Article{bennedsen2022decoupling,
  author    = {Bennedsen, M. and Lunde, A. and Pakkanen, M. S.},
  journal   = {Journal of Financial Econometrics},
  title     = {Decoupling the short-and long-term behavior of stochastic volatility},
  year      = {2022},
  number    = {5},
  pages     = {961--1006},
  volume    = {20},
  publisher = {Oxford University Press},
}

@Article{lyons2014rough,
  author  = {Lyons, T.},
  journal = {arXiv preprint arXiv:1405.4537},
  title   = {Rough paths, signatures and the modelling of functions on streams},
  year    = {2014},
}

@Article{lyons2022signature,
  author  = {McLeod, A. and Lyons, T.},
  journal = {EMS Surv. Math. Sci},
  title   = {Signature methods in machine learning},
  year    = {2025},
}

@Article{morrill2020generalised,
  author  = {Morrill, J. and Fermanian, A. and Kidger, P. and Lyons, T.},
  journal = {arXiv preprint:2006.00873},
  title   = {A generalised signature method for multivariate time series feature extraction},
  year    = {2020},
}

@Article{perez2018signature,
  author    = {Perez Arribas, I. and Goodwin, G. M. and Geddes, J. R. and Lyons, T. and Saunders, K. E. A.},
  journal   = {Translational psychiatry},
  title     = {A signature-based machine learning model for distinguishing bipolar disorder and borderline personality disorder},
  year      = {2018},
  number    = {1},
  pages     = {274},
  volume    = {8},
  publisher = {Nature Publishing Group UK London},
}

@Article{arrubarrena2024novelty,
  author  = {Arrubarrena, P. and Lemercier, M. and Nikolic, B. and Lyons, T. and Cass, T.},
  journal = {arXiv preprint arXiv:2402.14892},
  title   = {{Novelty Detection on Radio Astronomy Data using Signatures}},
  year    = {2024},
}

@Article{buehler2020generating,
  author  = {Buehler, H. and Horvath, B. and Lyons, T. and Perez Arribas, I. and Wood, B.},
  journal = {Available at SSRN 3657366},
  title   = {Generating financial markets with signatures},
  year    = {2020},
}

@Article{BFZ24,
  author  = {Bayraktar, E. and Feng, Q. and Zhang, Z.},
  journal = {SIAM Journal on Financial Mathematics},
  title   = {{Deep Signature Algorithm for Multidimensional Path-Dependent Options}},
  year    = {2024},
  number  = {1},
  pages   = {194-214},
  volume  = {15},
  doi     = {10.1137/23M1571563},
}

@Article{KLPA20,
  author  = {Kalsi, J. and Lyons, T. and Perez Arribas, I.},
  journal = {SIAM Journal on Financial Mathematics},
  title   = {{Optimal Execution with Rough Path Signatures}},
  year    = {2020},
  number  = {2},
  pages   = {470-493},
  volume  = {11},
  doi     = {10.1137/19M1259778},
}

@unpublished{SA23,
  author = {Schell, A. and Alaifari, R.},
  note   = {\url{https://www.sam.math.ethz.ch/sam_reports/reports_final/reports2023/2023-45_rev1.pdf}},
  title  = {Nonparametric Regression of Stochastic Processes via Signatures},
  year   = {2023}
}

@Article{mor_fbm,
  author  = {Jamshidi, N. and Redmann, M.},
  journal = {Journal of Applied Probability},
  title   = {{(Empirical) Gramian-based dimension reduction for stochastic differential equations driven by fractional Brownian motion}},
  year    = {2025},
  pages   = {1--32},
}

@Book{sig_methods_finance,
  editor    = {Bayer, C. and Dos Reis, G. and Horvath, B. and Oberhauser, H.},
  publisher = {Cham: Springer},
  title     = {Signature methods in finance. {An} introduction with computational applications. {Selected} papers based on the presentations of multiple summer schools on this subject},
  year      = {2026},
  isbn      = {978-3-031-97238-6; 978-3-031-97239-3},
  series    = {Springer Finance},
  volume    = {Lecture Notes},
  doi       = {10.1007/978-3-031-97239-3},
  fseries   = {Springer Finance},
  issn      = {1616-0533},
  keywords  = {91-06,91Gxx,60L10,00B25},
  language  = {English},
  zbmath    = {8066956},
}

@Article{ceylan2025global,
  author  = {Ceylan, M. and Pr{\"o}mel, D. J.},
  journal = {arXiv preprint arXiv:2512.16396},
  title   = {Global universal approximation with {B}rownian signatures},
  year    = {2025},
}

\end{document}